\documentclass{amsart}
\usepackage{amscd}
\usepackage{tikz}
\usetikzlibrary{cd}
\usepackage[pagebackref]{hyperref}

\newcommand{\Q}{\mathbb Q}
\newcommand{\Z}{\mathbb Z}
\newcommand{\fa}{{\mathfrak a}}
\newcommand{\fA}{{\mathfrak A}}
\newcommand{\fb}{{\mathfrak b}}
\newcommand{\fp}{{\mathfrak p}}
\newcommand{\fP}{{\mathfrak P}}
\newcommand{\fh}{{\mathfrak h}}
\newcommand{\fm}{{\mathfrak m}}
\newcommand{\frf}{{\mathfrak f}}
\newcommand{\fl}{{\mathfrak 2}}
\newcommand{\eps}{\varepsilon}
\newcommand{\OO}{{\mathcal O}}
\newcommand{\hj}{\widehat{j}}
\newcommand{\hA}{\widehat{A}}
\newcommand{\hC}{\widehat{C}}
\newcommand{\hR}{\widehat{R}}
\newcommand{\la}{\langle}
\newcommand{\ra}{\rangle}
\newcommand{\sth}{\,|\,}
\newcommand{\be}{\overline{e}}
\newcommand{\bnu}{\overline{\nu}}
\newcommand{\Gal}{\operatorname{Gal}}
\newcommand{\Cl}{\operatorname{Cl}}
\newcommand{\too}{\longmapsto}
\newcommand{\lra}{\longrightarrow}
\newcommand{\Lra}{\Longrightarrow}
\newcommand{\leri}{\longleftrightarrow}
\newcommand{\cok}{\operatorname{cok}}
\newcommand{\im}{\operatorname{im}}
\newcommand{\art}{\overset{Art}{\longleftrightarrow}}
\newcommand{\gen}{{\operatorname{gen}}}
\newcommand{\cen}{{\operatorname{cen}}}
\newcommand{\Ram}{{\operatorname{Ram}}}
\newcommand{\ts}{\textstyle}

\title{Kuroda's Class Number Formula}
\author{Franz Lemmermeyer}
\address{M\"orikeweg 1, 73489 Jagstzell, Germany}
\email{franz.lemmermeyer@gmx.de}

\newtheorem{thm}{Theorem}
\newtheorem{prop}{Proposition}
\newtheorem{lemma}{Lemma}

\begin{document}

\maketitle

\section*{Introduction}
Let $k$ be a number field and $K/k$ a $V_4$-extension, i.e., a normal
extension with $\Gal(K/k) = V_4$, where $V_4$ is Klein's four-group.
$K/k$ has three intermediate fields, say $k_1$, $k_2$, and $k_3$. We
will use the symbol $N^i$ (resp. $N_i$) to denote the norm of $K/k_i$
(resp. $k_i/k$), and by a widespread abuse of notation we will apply
$N^i$ and $N_i$ not only to numbers, but also to ideals and ideal
classes. The unit groups (groups of roots of unity, , groups of
fractional ideals, class numbers) in these fields will be denoted by
$E_k$, $E_1$, $E_2$, $E_3$, $E_K$ ($W_k$, $W_1$, \ldots,
$J_K$, $J_1$, \ldots, $h_k$, $h_1$, \ldots) respectively, and
the (finite) index $q(K) = E_K : E_1E_2E_3)$ is called the
{\em unit index} of $K/k$.

For $k = \Q$, $k_1 = \Q(\sqrt{-1}\,)$ and $k_2 = \Q(\sqrt{m}\,)$
it was already known to Dirichlet\footnote{Eisenstein [{\em \"Uber 
die Anzahl der quadratischen Formen in den verschiedenen complexen 
Theorieen}, J. Reine Angew. Math. {\bf 27} (1844), 311--316; 
Mathematische Werke I, Chelsea, New York, 1975, 89--94]
proved a similar formula for $K = \Q(\sqrt{-3},\sqrt{m}\,)$.}
\cite{Dir} that $h_K = \frac12 q(K) h_2h_3$.  Bachmann \cite{Ba},
Amberg \cite{Am} and Herglotz \cite{Her} generalized this class
number formula gradually to arbitrary extensions $K/\Q$ whose
Galois groups are elementary abelian $2$-groups.  A remark of
Hasse \cite[p. 3]{Has} seems to suggest\footnote{I have
meanwhile had a chance to verify Hasse's claim.} that
Varmon \cite{Var} proved a class number formula for extensions
with $\Gal(K/k)$ an elementary abelian $p$-group; unfortunately,
his paper was not accessible to me. Kuroda \cite{Kur2} later gave
a formula in case there is no ramification at the infinite primes.
Wada \cite{Wad} stated a formula for $2$-extensions of $k = \Q$
without any restriction on the ramification (and without proof),
and finally Walter \cite{Wal} used Brauer's class number relations
to deduce the most general Kuroda-type formula.

As we shall see below, Walter's formula for $V_4$-extensions does
not always give correct results if $K$ contains the $8$th roots
of unity. This does not, however, seem to effect the validity of
the work of Parry \cite{Par1,Par2} and Castela \cite{Cas}, both
of whom made use of Walter's formula.

The proofs mentioned above use analytic methods; for 
$V_4$-extensions $K/\Q$, however, there exist algebraic proofs 
given by Hilbert \cite{Hil} (if $\sqrt{-1} \in K$), Kuroda 
\cite{Kur1} (if $\sqrt{-1} \in K$), Halter-Koch \cite{HK} 
(if $K$ is imaginary), and Kubota \cite{Kub1,Kub2}. For base 
fields $k \ne \Q$, on the other hand, no non-analytic proofs 
seem to be known except for very special cases (see e.g. the 
very recent work of Berger \cite{Be}).

In this paper we will show how Kubota's proof can be generalized.
The proof consists of two parts; in the first part, where we
measure the extent to which $\Cl(K)$ is generated by classes
coming from the $\Cl(k_i)$, we will use class field theory in its
ideal-theoretic formulation (see Hasse \cite{HasB} or Garbanati
\cite{Gar}). The second part of the proof is a somewhat lengthy
index computation.

\section{Kuroda's Formula}
For any number field $F$, let $\Cl_u(F)$ be the odd part of the
ideal class group of $F$,i.e., the direct product of the $p$-Sylow
subgroups of $\Cl(F)$ for all odd primes $p$. It was already
noticed by Hilbert that the odd part of $\Cl(F)$ behaves well in
$2$-extensions, and the following fact is a special case of a theorem
of Nehrkorn \cite{Neh} (this special case can also be found in
Kuroda \cite{Kur2} or Reichardt \cite{Rei}):
\begin{equation}
\Cl_u(K) \simeq \Big(\prod_{i=1}^3 \Cl_u(k_i)/\Cl_u(k)\Big)
         \times \Cl_u(k) \quad \text{for $V_4$-extensions}\ K/k.
\end{equation}
Here $\prod$ denotes the direct product. This simple formula allows
us to compute the structure of $\Cl_u(K)$; of course we cannot
expect a similar result to hold for $\Cl_2(K)$, mainly because of
the following two reasons:
\begin{enumerate}
\item Ideal classes of $k_i$ may become principal in $K$
      (capitulation), and this means that we cannot regard
      $\Cl_2(k_i)$ as a subgroup of $\Cl_2(K)$.
\item Even if they do not capitulate, ideal classes of subfields may
      coincide in $K$: consider a prime ideal $\fp$ that ramifies
      in $k_1$ and $k_2$; then the prime ideals above $\fp$ in $k_1$
      and $k_2$ will generate the same ideal class in $K$.
\end{enumerate}

Nevertheless there is a homomorphism
$$ j: \Cl(k_1) \times \Cl(k_2) \times \Cl(k_3) \lra \Cl(K) $$
defined as follows: let $c_i = [\fa_i]$ be the ideal class in
$k_i$ generated by $\fa_i$; then $\fa_i\OO_K$ is the ideal in
$\OO_K$ (the ring of integers in $K$) generated by $\fa_i$,
and it is obvious that $j(c_1,c_2,c_3) = [\fa_1\fa_2\fa_3\OO_K]$
is a well defined group homomorphism, and that moreover
$$ h(K) = \frac{\cok j}{\ker j} \cdot h_1h_2h_3. $$
In order to compute $h(K)$ we have to determine the orders of
the groups $\ker j$ and $\cok j = \Cl(K)/\im j$. This will be
done as follows:

\begin{prop}
Let $\hj$ be the restriction of $j$ to the subgroup
$$ \hC = \{(c_1, c_2, c_3) \sth N_1c_1 N_2c_2 N_3c_3 = 1\} $$
of the direct product $\Cl(k_1) \times \Cl(k_2) \times \Cl(k_2)$.
Then
\begin{equation}\label{E1.2}
 h_k \cdot \frac{\cok j}{\ker j}  = \frac{\cok \hj}{\ker \hj}.
\end{equation}
\end{prop}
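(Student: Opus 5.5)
The plan is to reduce \eqref{E1.2} to a counting statement about finite groups and then prove a surjectivity statement using class field theory.

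\emph{Step 1 (reduction to counting).} Let $C = \Cl(k_1)\times\Cl(k_2)\times\Cl(k_3)$ and $G = \Cl(K)$. For any homomorphism $f\colon A \to G$ of finite groups we have $\#\ker f = \#A/\#\im f$ and $\#\cok f = \#G/\#\im f$. Hence
$$ \frac{\cok f}{\ker f} = \frac{\#G/\#\im f}{\#A/\#\im f} = \frac{\#G}{\#A}. $$
Here the fractions $\cok/\ker$ are ratios of orders, as in the paper.

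Applying this to $j$ and to $\hj = j|_{\hC}$ gives
$$ \frac{\cok j}{\ker j} = \frac{h_K}{h_1h_2h_3} \quad\text{and}\quad \frac{\cok \hj}{\ker \hj} = \frac{h_K}{\#\hC}. $$
So \eqref{E1.2} is equivalent to $(C:\hC) = h_k$.

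Now $\hC$ is the kernel of the homomorphism
$$ \nu\colon C \lra \Cl(k), \qquad (c_1,c_2,c_3) \too N_1c_1\,N_2c_2\,N_3c_3 . $$
This map is well defined because the relative norm sends principal ideals to principal ideals. Therefore $(C:\hC) = \#\im\nu$, and everything reduces to showing that $\nu$ is surjective.

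\emph{Step 2 (surjectivity of $\nu$).} The image of $\nu$ is the product $H_1H_2H_3$, where $H_i = N_i\Cl(k_i) \subseteq \Cl(k)$.

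By class field theory in its ideal-theoretic form, the group $H_i$ is the subgroup of $\Cl(k)$ corresponding to the maximal subextension $L_i = k_i \cap H$ of $k_i/k$ contained in the Hilbert class field $H$ of $k$. In particular, under the Artin isomorphism we have $\Cl(k)/H_i \simeq \Gal(L_i/k)$, and $L_i = k$ or $L_i = k_i$.

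The Galois correspondence between subgroups of $\Cl(k)$ and subfields of $H/k$ turns products of subgroups into intersections of fields. Hence $H_1H_2H_3$ corresponds to $L_1\cap L_2\cap L_3 \subseteq k_1\cap k_2 = k$, since $k_1$ and $k_2$ are distinct quadratic extensions of $k$. Thus $H_1H_2H_3 = \Cl(k)$.

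Concretely, the $H_i$ have index at most $2$. If two of them, say $H_1$ and $H_2$, were both proper, they would be distinct subgroups of index $2$, because $k_1 \ne k_2$ are distinct fields. Two distinct index-$2$ subgroups already generate $\Cl(k)$.

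\emph{Main obstacle.} The only non-formal point is the identification of the norm group $N_i\Cl(k_i)$ with the subgroup attached to $k_i\cap H$. This is the statement that $(\Cl(k):N_{k_i/k}\Cl(k_i)) = [k_i\cap H:k]$, which I would quote from the ideal-theoretic class field theory cited in the introduction (Hasse, Garbanati). It is also essential that $H$ here is the Hilbert class field in the wide sense, so that ramification at infinite primes is treated correctly.

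Once this is granted, $\#\im\nu = h_k$. Combining with Step 1 gives $h_k\cdot\frac{\cok j}{\ker j} = h_k\cdot\frac{h_K}{h_1h_2h_3} = \frac{h_K}{\#\hC} = \frac{\cok\hj}{\ker\hj}$, which is \eqref{E1.2}.
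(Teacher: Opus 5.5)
Your proposal is correct and follows essentially the same route as the paper. Both arguments rest on the surjectivity of $\nu$ via class field theory: the norm groups $N_i\Cl(k_i)$ are either all of $\Cl(k)$ or distinct index-$2$ subgroups, so $(C:\hC)=h_k$. The only difference is that you get the index relation by direct counting ($\#\cok f/\#\ker f=\#G/\#A$), while the paper applies the snake lemma to obtain the exact sequence $1\to\ker\hj\to\ker j\to\Cl(k)\to\cok\hj\to\cok j\to 1$.
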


Now Artin's reciprocity law, combined with Galois theory, gives
a correspondence $\art$ between subgroups of $\Cl(K)$ and subfields
of the Hilbert class field $K^1$ of $K$. We will find that
$\im \hj \art K_\gen$, the genus class field of $K$ with respect
to $k$, and then the well known formula of Furuta \cite{Fur} shows
\begin{equation}\label{E1.3}
  \# \cok \hj = (\Cl(K): \im \hj) = (K_\gen:k) =
   2^{d-2} h_k \frac{\prod e(\fp)}{(E_k:H)}, \end{equation}
where
\begin{itemize}
\item $d$ is the number of infinite places ramified in $K/k$;
\item $e(\fp)$ is the ramification index in $K/k$ of a prime
      ideal $\fp$ in $k$, and $\prod$ is extended over all
      (finite)\footnote{The contribution from the infinite primes
      is taken care of by the factor $2^d$.} prime ideals of $k$;
\item $H$ is the group of units in $E_k$ that are norm
      residues\footnote{A norm residue is an element of $k$ that
      is a local norm for $K/k$ everywhere.} in $K/k$.
\end{itemize}

The computation of $\# \ker \hj$ is a bit tedious, but in the end
we will find
\begin{equation}\label{E1.4}
\ts \# \ker \hj = 2^{v-1} h_k^2 \prod e(\fp) \cdot (H: E_k^2)/q(K),
\end{equation}
where $v = 1$ if $K = k(\sqrt{\eps},\sqrt{\eta}\,)$ with units
$\eps, \eta \in E_k$, and $v = 0$ otherwise.

If we collect these results, define $\kappa$ to be the
$\Z$-rank of $E_k$, and recall the formula
$(E_k : E_k^2) = 2^{\kappa+1}$, we obtain

\begin{thm}
Let $K/k$ we a $V_4$-extension of number fields. Then Kuroda's
class number formula holds:
\begin{equation}\label{E1.5}
  h(K) = 2^{d-\kappa - 2 - v} q(K) h_1 h_2 h_3 / h_k^2.
\end{equation}
\end{thm}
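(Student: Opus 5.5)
The theorem follows by combining the three displayed ingredients: the Proposition, Furuta's formula (\ref{E1.3}) for $\#\cok\hj$, and the kernel computation (\ref{E1.4}). The plan has three steps. First, justify $h(K) = \frac{\#\cok j}{\#\ker j}\, h_1h_2h_3$. Second, use (\ref{E1.2}) to replace $\#\cok j/\#\ker j$ by $h_k^{-1}\,\#\cok\hj/\#\ker\hj$. Third, substitute (\ref{E1.3}) and (\ref{E1.4}) and simplify using $(E_k:E_k^2)=2^{\kappa+1}$.

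The first step is immediate from the exact sequence
\[ 1 \lra \ker j \lra \Cl(k_1)\times\Cl(k_2)\times\Cl(k_3) \lra \Cl(K) \lra \cok j \lra 1, \]
which gives $h(K)\cdot\#\ker j = h_1h_2h_3\cdot\#\cok j$. Applying the Proposition,
\[ h(K) = \frac{h_1h_2h_3}{h_k}\cdot\frac{\#\cok\hj}{\#\ker\hj}. \]

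The third step is the computation. Inserting (\ref{E1.3}) and (\ref{E1.4}), the factor $\prod e(\fp)$ cancels, and one factor of $h_k$ from $\#\cok\hj$ cancels against one of the two in $\#\ker\hj$:
\[ \frac{\#\cok\hj}{\#\ker\hj} = \frac{2^{d-2} h_k \prod e(\fp)/(E_k:H)}{2^{v-1}h_k^2\prod e(\fp)\,(H:E_k^2)/q(K)} = \frac{2^{d-1-v}\,q(K)}{h_k\,(E_k:H)(H:E_k^2)}. \]
Since every square of a unit is a norm from $K$ (indeed $\eps^2 = N_{K/k}$ of nothing in general, but $\eps^2$ is a norm from each quadratic subextension and hence a local norm everywhere for $K/k$, as local norm groups of $V_4$-extensions contain squares), we have $E_k^2\subseteq H\subseteq E_k$. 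Therefore $(E_k:H)(H:E_k^2)=(E_k:E_k^2)=2^{\kappa+1}$, which gives
\[ h(K) = 2^{d-1-v-\kappa-1}\,q(K)\,\frac{h_1h_2h_3}{h_k^2} = 2^{d-\kappa-2-v}\,q(K)\,\frac{h_1h_2h_3}{h_k^2}, \]
which is (\ref{E1.5}).

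Two points need care. The inclusion $E_k^2\subseteq H$ holds because local norm groups have index dividing $4$ with quotient of exponent $2$, so they contain all squares. The main obstacle is not this assembly but the inputs themselves. The hard work is the proof of the Proposition, establishing $\im\hj\art K_\gen$ so that Furuta's formula applies, and above all the index computation (\ref{E1.4}) for $\#\ker\hj$. In that computation the unit index $q(K)$ and the correction $v$ enter through comparing $E_K$ with $E_1E_2E_3$ and units of $k$ whose square roots generate $K$. For the theorem as stated, I take those results as given.
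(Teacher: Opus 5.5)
Your proof is correct and is the same argument as the paper's: the paper also gets the theorem by combining $h(K) = h_1h_2h_3\,\#\cok j/\#\ker j$ with (\ref{E1.2}), (\ref{E1.3}), (\ref{E1.4}) and $(E_k:E_k^2)=2^{\kappa+1}$, and $\prod e(\fp)$ and one factor of $h_k$ cancel exactly as you show. One wording slip: your opening claim that squares of units are norms from $K$ should say they are norm residues, which is what you then justify correctly (it also follows directly from Prop.~\ref{P2.2}(iii) with $\alpha = 1$ and $a = \eps$).
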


In particular,
$$ h(K) =  \begin{cases}
  \frac14 q(K) h_1 h_2 h_3 & \text{if $k = \Q$ and $K$ is real}, \\
  \frac12 q(K) h_1 h_2 h_3 & \text{if $k = \Q$ and $K$ is complex}, \\
  \frac14 q(K) h_1 h_2 h_3/h_k^2 & \text{if $k$ is a complex quadratic
               extension of $\Q$}.
  \end{cases} $$

\section{The proofs}\label{S2}
In order to prove (\ref{E1.2}), we define a homomorphism
$$ \nu: C = \Cl(K_1) \times \Cl(k_2) \times \Cl(k_3)
    \lra \Cl(k), \quad \nu(c_1,c_2,c_3) = N_1c_1 N_2c_2 N_3c_3.$$
If at least one of the extensions $k_i/k$ is ramified,\footnote{At
a finite or infinite prime.} we know $N_i \Cl(k_i) = \Cl(k)$ by
class field theory. If all the $k_i/k$ are unramified, the groups
$N_i \Cl(k_i)$ will have index $2 = (k_i:k)$ in $\Cl(k)$, and
they will be different since
$$ k_i/k \art N_i \Cl(k_i) $$
in this case. Therefore $\nu$ is onto, and putting
$\hC = \ker \nu$ we get an exact sequence
$\begin{CD} 1 @>>> \hC @>>> C @>>> \Cl(k) @>>> 1. \end{CD} $

Let $\hj$ be the restriction of $j$ to $\hC$; then the diagram
$$ \begin{CD}
   1 @>>> \hC @>>> C @>>> \Cl(k) @>>{\nu}> 1 \\
   @. @VV{\hj}V @VV{j}V @VVV @. \\
   1 @>>> \Cl(L) @>>> \Cl(L) @>>> 1 @. \end{CD} $$
is exact and commutes. The snake lemma gives us an exact
sequence
$$ \begin{CD} 1 @>>> \ker \hj @>>> \ker j @>>> \Cl(k)
                @>>> \cok \hj @>>> \cok j @>>> 1, \end{CD} $$
and this implies the index relation (\ref{E1.2}) we wanted to
prove.

Before we start proving (\ref{E1.3}), we define $K^{(2)}$
to be the maximal subextension of $K_\gen/k$  such that
$\Gal(K^{(2)}/k)$ is an elementary abelian $2$-group. Moreover,
we let $J_K$ (resp. $H_K$) denote the group of (fractional)
ideals (resp. principal ideals) of $K$.

\begin{prop}\label{P2.1}
To every subfield $F$ of the Hilbert class field $K^1$ of $K$
there is a unique ideal group $\fh_F$ such that
$H_K \subseteq \fh_F \subseteq J_K$. Under this correspondence,
$$ \Gal(K^1/F) \simeq \Cl(K)/(J_K/\fh_F) \simeq \fh_F/H_K, $$
and we find the following diagram of subextensions $F/k$ of $K/k$
and corresponding Galois groups $\Gal(K^1/F)$:
\begin{center}
\begin{tikzcd}
  K^1 \arrow[leftrightarrow]{r} \arrow[d,dash] & 1 \arrow[d,dash] \\
  K_\gen \arrow[leftrightarrow]{r}  \arrow[d,dash] & \im \hj \arrow[d,dash] \\
  K^{(2)} \arrow[leftrightarrow]{r} \arrow[d,dash] & \im j   \arrow[d,dash] \\
  K \arrow[leftrightarrow]{r} & \Cl(K) 
\end{tikzcd}  
\end{center}
\end{prop}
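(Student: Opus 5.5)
The first part of Proposition~\ref{P2.1}, the correspondence $F \leftrightarrow \fh_F$ with $\Gal(K^1/F) \simeq \fh_F/H_K$, is the ideal-theoretic form of Artin reciprocity for the unramified abelian extension $K^1/K$, as in Hasse \cite{HasB}. For $F \subseteq K^1$ we let $\fh_F$ be the group of ideals $\fA\in J_K$ whose Artin symbol $(\frac{K^1/K}{\fA})$ fixes $F$, i.e.\ the ideals generated by norms from $F$ together with $H_K$. Then the Artin map $J_K/H_K = \Cl(K) \to \Gal(K^1/K)$ is an isomorphism carrying $\fh_F/H_K$ onto $\Gal(K^1/F)$, and uniqueness follows from Galois theory. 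So the content of the proposition lies in the two identifications in the diagram: $K^{(2)}\art \im j$ and $K_\gen \art \im \hj$. For each of them we will show that the ideal group generated by $\im j$ (resp.\ $\im\hj$) and $H_K$ is exactly the group of ideals whose Artin symbol is trivial on $K^{(2)}$ (resp.\ $K_\gen$).

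For the genus field, recall that $K_\gen$ is the maximal extension of $K$ of the form $KF$ with $F/k$ abelian and $K_\gen/K$ unramified. By functoriality of the Artin symbol, for an ideal $\fA$ of $K$ the restriction of $(\frac{K^1/K}{\fA})$ to $K_\gen$ is determined by $(\frac{K_\gen/k}{N_{K/k}\fA})$. So $\fA$ corresponds to an element of $\Gal(K^1/K_\gen)$ if and only if $N_{K/k}\fA$ lies in the norm group $\fh$ of $K_\gen/k$. By the definition of the genus field, this norm group is $N_{K/k}J_K \cdot H_k'$, where $H_k'$ is the appropriate ray group, i.e.\ the group of norm residues from $K/k$ modulo a suitable conductor. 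Now take $(c_1,c_2,c_3)\in\hC$ with $c_i = [\fa_i]$. Then $N_{K/k}(\fa_i\OO_K) = (N_i\fa_i)^2$, and $N_i\fa_i$ is a norm from $k_i$. The condition $\prod N_ic_i = 1$ makes $\prod N_i\fa_i$ principal, say equal to $(\alpha)$. Hence $N_{K/k}(\fa_1\fa_2\fa_3\OO_K) = (\alpha^2)$. One must then check that $\alpha^2$, adjusted by a unit if necessary, is a norm residue. This gives $\im\hj \subseteq \Gal(K^1/K_\gen)$. For $\im j$ one argues the same way with $K^{(2)}$, using that $K^{(2)}/K$ is cut out by quadratic characters of $k$. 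A product of three squares of norms from the $k_i$ is killed by every character of $\Gal(K^{(2)}/k)$ that is trivial on $\Gal(K^{(2)}/K)$. This yields $\im j \subseteq \Gal(K^1/K^{(2)})$.

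The main obstacle is the reverse inclusions: we must show that every ideal class of $K$ whose Artin symbol fixes $K_\gen$ (resp.\ $K^{(2)}$) actually comes from $\hC$ (resp.\ $C$). The plan is an index comparison. We compute $(\Gal(K^1/K):\Gal(K^1/K^{(2)}))=(K^{(2)}:K)$ and $(K_\gen:K)$ via Furuta's formula, and compare with $\#\cok j$ and $\#\cok\hj$. Directly, we would factor $\fA \in J_K$ whose Artin symbol is trivial on $K_\gen$ into prime ideals. A prime of $K$ that is ramified or inert over some $k_i$ is, up to squares, an extension of an ideal of $k_i$, so its class lies in $\im j$. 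For split primes $\fP$ we use $\fP^{1+\sigma_i} = \fp\OO_K$, summed over the three involutions, to write $\fP^2$ in terms of extended ideals; together with the identity $1+\sigma_1+\sigma_2+\sigma_3 = N_{K/k}$, this shows that $\Cl(K)/\im j$ is an elementary abelian $2$-group. Its dual is then given by unramified quadratic characters of $K$ trivial on $\im j$, i.e.\ characters vanishing on all extended classes. We would identify these with characters of $k$-ideals, hence with quadratic extensions $KF$, $F/k$ abelian, which gives $\im j \art K^{(2)}$. The refinement to $\hC$ and $K_\gen$ works the same way, with the norm condition $\prod N_ic_i=1$ matching the ambiguous-class/genus characters of $K/k$. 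Carrying out this duality carefully, and in particular handling units and the conductor in the norm-residue step, is where the real work lies.
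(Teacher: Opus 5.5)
Your reduction is set up correctly. You use Artin reciprocity for $K^1/K$, and the Verschiebungssatz lets you test $\fA$ by whether $N_{K/k}\fA$ lies in the norm group of $K_\gen/k$. The inclusions $\im j\subseteq \Gal(K^1/K^{(2)})$ and $\im\hj\subseteq\Gal(K^1/K_\gen)$ are also fine. For the second one no unit adjustment is needed: $a^2$ with $a\in k^\times$ is a local norm everywhere, because all local Galois groups of $K/k$ have exponent $2$.

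However, you misidentify the norm group. The group $N_{K/k}J_K\cdot H_k'$ is the Takagi group of $K/k$ itself. With it, your criterion would put every ideal of $K$ into $\fh_\gen$ and give $K_\gen=K$. The correct group, which the paper recalls from Scholz and Gurak, is $N_{K/k}H_K^{(\fm)}\cdot H_\fm^{(1)}$: the principal ideals generated by norm residues mod $\fm$. Only norms of \emph{principal} ideals of $K$ enter.

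The genuine gap is the reverse inclusion, namely that every $\fA$ with $N_{K/k}\fA$ in this group lies in $\{\fa_1\fa_2\fa_3 \sth \prod N_i\fa_i \text{ principal}\}\cdot H_K$. This is the whole content of $K_\gen \art \im\hj$.

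\emph{Index comparison.} This route is circular. The paper obtains $\#\cok\hj$ \emph{from} this proposition together with Furuta's formula. By (\ref{E1.2}) and the algebraic computation of $\#\ker\hj$, an independent value of $\#\cok\hj$ is equivalent to Kuroda's formula itself.

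\emph{Duality.} Your sketch leaves the key step (``identify these with characters of $k$-ideals'') unproved. It also cannot treat $K_\gen$ ``the same way'': $\Cl(K)/\im\hj\simeq\Gal(K_\gen/K)$ need not have exponent $2$, since $\im j/\im\hj$ is a quotient of $\Cl(k)$.

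What you are missing are the two concrete ingredients the paper uses.
\begin{enumerate}
\item[(a)] Prop.~\ref{P2.2}: an element of $k^\times$ that is a local norm at every place of $K/k$ has the form $a^2N_{K/k}\alpha$. The proof uses Hasse's norm theorem for the cyclic extensions $k_1/k$, $k_2/k$, then Hilbert 90 applied to $\alpha_1/\alpha_2$ with respect to $\sigma\tau$. This is what turns $\fA\in\fh_\gen$ into $N_{K/k}(\fA/\alpha)=(a)^2$. Note that if $N_{K/k}\fA=(r)$ with $r$ a local norm at the ramified places, then $r$ is automatically a local norm at the unramified ones, since $f_v\mid v(r)$.
\item[(b)] The decomposition lemma: an ideal $\fb$ of $K$ whose norm to $k$ is the square of an ideal is a product $\fa_1\fa_2\fa_3$ of ideals extended from the $k_i$. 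After removing extended factors, the parity forced by $N_{K/k}\fb=(a)^2$ gives the remaining primes even exponents. Then $2=(1+\sigma)+(1+\tau)-(1+\sigma\tau)\sigma$ writes each $\fP^2$ as such a product. Your remark that this identity makes $\Cl(K)/\im j$ elementary abelian is only the weak form of (b); the parity condition is what is needed.
\end{enumerate}
With (a) and (b), $(a)^2=(N_1\fa_1N_2\fa_2N_3\fa_3)^2$ gives $(a)=\prod N_i\fa_i$, which places $[\fA]$ in $\im\hj$. For $K^{(2)}\art\im j$ the paper itself only cites Kubota, so leaving that direction sketchy is in line with the paper.
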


\begin{proof}
The correspondence $K^{(2)} \leri \im j$ will not be needed
in the sequel and is included only for the sake of completeness;
the main ingredient for a proof can be found in Kubota
\cite[Hilfssatz 16]{Kub2}.

Before we start proving $K_\gen \leri \im \hj$, we recall that
$K_\gen$ is the class field of $k$ for the ideal group
$N_{K/k} H_K^{(\fm)} \cdot H_{\fm}^{(1)}$ of the norm residues
modulo $\fm$, where the defining modulus $\fm$ is a multiple of
the conductor $\frf(K/k)$ (the notation is explained in Hasse
\cite{HasB} or Garbanati \cite{Gar}, the result can be found
in Scholz \cite{Sch2} or Gurak \cite{Gur}). The assertion of
Herz \cite[Prop. 1]{Herz} that $K_\gen$ is the class field
for $N_{K/k} H_K^{(m)}$ is faulty: one mistake in his proof
lies in the erroneous assumption that every principal ideal
of $K$ is the norm of an ideal from $K^1$. Although this is
true for prime ideals, it does not hold in general, as the
following simple counterexample shows: the Hilbert class
field of $K = \Q(\sqrt{-5}\,)$ is $K^1 = K(\sqrt{-1}\,)$, and
the principal ideal $(1+\sqrt{-5}\,)$ cannot be a norm from $K^1$
since the prime ideals above $(2,1+\sqrt{-5}\,)$ and
$(3,1+\sqrt{-5}\,)$ are inert in $K^1/K$. Moreover, contrary to
Herz's claim, not every ideal in the Hilbert class field of $K$
is principal: this is, of course, only true for ideals coming
from $K$.
\end{proof}

\subsection*{Proof of (\ref{E1.3})}
Our task now is to transfer the ideal group
$N_{K/k} H_K^{(\fm)} \cdot H_\fm^{(1)}$ in $k$, which is
defined modulo $\fm$, to an ideal group in $K$ defined
modulo $(1)$. To do this we need

\begin{prop}\label{P2.2}
For $V_4$-extensions $K/k$, the following assertions are
equivalent:
\begin{enumerate}
\item[(i)] $r \in k^\times$ is a norm residue in $K/k$ at
           every place of $k$;
\item[(ii)] $r \in k^\times$ is a (global) norm from $k_1/k$
           and $k_2/k$;
\item[(iii)] there exist $\alpha \in K^\times$ and
           $a \in k^\times$ such that $r = a^2 \cdot N_{K/k} \alpha$.
\end{enumerate}
\end{prop}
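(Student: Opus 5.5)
We prove the cycle (iii) $\Rightarrow$ (i) $\Rightarrow$ (ii) $\Rightarrow$ (iii). Throughout, write $k_1 = k(\sqrt{a_1})$, $k_2 = k(\sqrt{a_2})$, $k_3 = k(\sqrt{a_1a_2})$, and let $\sigma,\tau$ be the generators of $\Gal(K/k)$ fixing $k_1$ and $k_2$ respectively.

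\textbf{(iii) $\Rightarrow$ (i).} This step is formal. $N_{K/k}\alpha$ is a global norm and hence a local norm everywhere. Moreover $a^2$ is a local norm from $K_\fP/k_\fp$ at every place: the local extension is cyclic or trivial (degree $1$ or $2$), or it is the full $V_4$-extension. In the latter case $a^2 = N_{k_{i}}(a)$ for each quadratic subextension, but we need a norm from $K_\fP$ itself. By local class field theory, the local norm group of a $V_4$-extension is the intersection of the norm groups of its quadratic subextensions. Since $a^2$ lies in each of these norm groups, it lies in their intersection. (Equivalently, $k_\fp^{\times 2}$ is contained in every subgroup of index dividing $4$ with quotient of exponent $2$.)

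\textbf{(i) $\Rightarrow$ (ii).} If $r$ is a local norm from $K$ everywhere, then by transitivity of norms it is a local norm from $k_1$ and from $k_2$ at every place. Since $k_i/k$ is cyclic, the Hasse norm theorem makes $r$ a global norm from $k_1/k$ and from $k_2/k$.

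\textbf{(ii) $\Rightarrow$ (iii).} This is the heart of the argument and the main obstacle, since the Hasse principle fails for $V_4$-extensions and we must exploit the square factor $a^2$. Write $r = N_1\beta_1 = N_2\beta_2$ with $\beta_i \in k_i^\times$. In $K$ consider
\[ \gamma = \beta_1/\beta_2 \in K^\times, \qquad N_{K/k_3}\gamma = \frac{\beta_1\,\beta_1^{\sigma\tau}}{\beta_2\,\beta_2^{\sigma\tau}}. \]
The automorphism $\sigma\tau$ restricts to the nontrivial automorphism of both $k_1$ and $k_2$, so this equals $N_1\beta_1/N_2\beta_2 = 1$. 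By Hilbert~90 for $K/k_3$ we get $\gamma = \delta^{1-\sigma\tau}$ for some $\delta \in K^\times$, that is,
\[ \beta_1\,\delta^{\sigma\tau} = \beta_2\,\delta. \]
Now set $\alpha = \beta_1\delta^{\sigma\tau}$ and compute $N_{K/k}\alpha$. We have $N_{K/k}\beta_1 = (N_1\beta_1)^2 = r^2$, and $N_{K/k}\delta^{\sigma\tau} = N_{K/k}\delta$, so $N_{K/k}\alpha = r^2 N_{K/k}\delta$. This alone does not isolate $r$, so instead we use the relation more cleverly.

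Apply $N_{K/k_2}$ (that is, multiply by the $\tau$-conjugate) to $\beta_2 = \beta_1\delta^{\sigma\tau}/\delta$. The left side becomes $\beta_2^2$, the right side $N_{K/k_2}(\beta_1)\cdot(\delta^{\sigma\tau}/\delta)^{1+\tau}$. Here $N_{K/k_2}\beta_1 = N_1\beta_1 = r$, and $(\delta^{\sigma\tau}/\delta)^{1+\tau} = \delta^{\sigma\tau+\sigma}/\delta^{1+\tau} = N_{K/k_2}(\delta)^{\sigma}/N_{K/k_2}(\delta)$. Hence
\[ r = \beta_2^2\,\frac{n}{n^{\sigma}}, \qquad n = N_{K/k_2}\delta \in k_2, \]
so that $r = \beta_2^2 n^2/N_2(n)$. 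Thus $r = c^2 N_2(n)^{-1}$ with $c \in k_2$, which is still not of the required form. To finish, we would adjust $\delta$ by elements of $k_3^\times$ (the ambiguity in Hilbert~90), aiming to make $c\in k$ and $N_2(n)^{-1}$ a norm from $K$. If this bookkeeping becomes messy, a fallback is available: use (i) with the local description of norms from $K/k$ together with the exact sequence $\hat H^{-1}$ for $V_4$. The obstruction to the Hasse principle, $\mathrm{Ker}(k^\times/N K^\times \to \prod_v)$, is killed by squaring and is represented by elements of $N_1 k_1^\times \cap N_2 k_2^\times$ modulo $k^{\times2} N_{K/k}K^\times$. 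Showing that this quotient is trivial is exactly (ii) $\Rightarrow$ (iii), and it can be checked via the explicit element $a = N_{?}$ produced above. I expect this final adjustment to be the only genuinely delicate point.
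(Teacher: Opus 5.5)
\paragraph{The gap is in (ii) $\Rightarrow$ (iii).} Your implications (iii) $\Rightarrow$ (i) and (i) $\Rightarrow$ (ii) are correct and essentially match the paper. For (iii) $\Rightarrow$ (i), the paper invokes Hasse's formula $\big(\frac{\beta, k_1k_2}{\fp}\big) = \big(\frac{\beta, k_1}{\fp}\big)\big(\frac{\beta, k_2}{\fp}\big)$. That is the same fact as your statement that the local norm group of $K$ is the intersection of those of its quadratic subextensions.

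In (ii) $\Rightarrow$ (iii) you never actually produce $a$ and $\alpha$. You stop at $r = c^2/N_2(n)$ with $c \in k_2$ and declare this not of the required form. You then offer only an unspecified ``adjustment of $\delta$ by elements of $k_3^\times$'' and a cohomological fallback. As you note yourself, that fallback merely restates (ii) $\Rightarrow$ (iii). So as written this implication is unproved.

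\paragraph{The fix.} Your $\delta$ already works, with no adjustment. In $\Z[\Gal(K/k)]$ one has $1-\sigma\tau = (1+\sigma) - \sigma(1+\tau)$. Hence $\beta_1/\beta_2 = \delta^{1-\sigma\tau}$ can be rewritten as
\[ \frac{N_{K/k_1}\delta}{\beta_1} \;=\; \frac{(N_{K/k_2}\delta)^{\sigma}}{\beta_2}. \]
The left side lies in $k_1$ and the right side in $k_2$, so both equal some $a \in k_1 \cap k_2 = k$. Then
\[ N_{K/k}\delta = N_1(N_{K/k_1}\delta) = N_1(a\beta_1) = a^2 r, \]
that is, $r = a^{-2} N_{K/k}\delta$. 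This is exactly the paper's argument.

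Your own computation also closes in one line. Combine $r = \beta_2^2\, n/n^{\sigma}$ with $r = N_2\beta_2 = \beta_2\beta_2^{\sigma}$ to get $n^{\sigma}/n = \beta_2/\beta_2^{\sigma}$. Thus $c = \beta_2 n \in k_2$ satisfies $c^{\sigma} = c$. Since $\sigma$ acts nontrivially on $k_2$, this gives $c \in k$. Moreover $N_2(n) = N_2(N_{K/k_2}\delta) = N_{K/k}\delta$ is already a norm from $K$. Hence $r = c^2\, N_{K/k}(\delta^{-1})$. The detour through $\alpha = \beta_1\delta^{\sigma\tau}$ should simply be dropped.
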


The elements of $N_{K/k} H_K^{(\fm)} \cdot H_\fm^{(1)}$ therefore
have the form $a^2 \cdot N_{K/k} \alpha$, where $a \in k$,
$\alpha \in K$, and $(\alpha) + \fm = (1)$. Using the
Verschiebungssatz we find that $K_\gen/K$ belongs to the group
$$ \fh_\gen = \{ \fa \in J_K \sth \fa + \fm = (1),
    N_{K/k} \fa \in N_{K/k} H_K^{(\fm)} \cdot H_\fm^{(1)}\}.$$
Now $N_{K/k} \fa = a \cdot N_{K/k} \alpha$ if and only if
$N_{K/k} (\fa/\alpha) = (a)$; we put $\fb = \fa/\alpha$
and claim that there are ideals $\fa_i$ in $k_i$ such that
$\fb = \fa_1 \fa_2 \fa_3$. We assume without loss of
generality that $\fb$ is an (integral) ideal in $\OO_K$.
We may also assume that no ideal lying in a subfield $k_i$
divides $\fb$. But then any $\fP \mid \fb$ necessarily has
inertial degree $1$, and no conjugate of $\fP$ divides $\fb$.
Writing $\fp^m \parallel \fb$ we deduce
$$ N_{K/k} \fP^m \parallel N_{K/k} \fb = (a^2), $$
and this implies $2 \mid m$.

Let $\sigma$, $\tau$ and $\sigma\tau$ denote the nontrivial
automorphisms of $K/k$ fixing the elements of $k_1$, $k_2$
and $k_3$, respectively; the identity
$$ 2 = 1 + \sigma + \tau + \sigma\tau - (1+\sigma\tau)\sigma $$
in $\Z[\Gal(K/k)]$ shows
$\fP^2 = N^1 \fP \cdot N^2 \fP \cdot (N^3 \fP)^{-\sigma}$,
and we are done.

Now $(a^2) = N_{K/k} \fb = N_{K/k} \fa_1\fa_2\fa_3 =
 (N_1 \fa_1 \cdot N_2 \fa_2 \cdot N_3 \fa_3)^2$, and
extracting the square root we obtain
$(a) = N_1 \fa_1 \cdot N_2 \fa_2 \cdot N_3 \fa_3$.

Conversely, all ideals $\fa = \fa_1 \fa_2 \fa_3$ with
$\fa + \fm = (1)$ and
$(a) = N_1 \fa_1 \cdot N_2 \fa_2 \cdot N_3 \fa_3$ lie in
$\fh_\gen$, and the same is true of all principal ideals
prime to $\fm$ since the class field $K_\fh$ corresponding
to $\fh$ is unramified if and only if
$H_K^{(\fm)} \subseteq \fh$. Therefore
$$ \fh_\gen = \{ \fa = \fa_1 \fa_2 \fa_3 \sth
   \fa + \fm = (1), \
   N_1 \fa_1 \cdot N_2 \fa_2 \cdot N_3 \fa_3 = (a)
   \ \text{for some}\ a \in k\} \cdot H_K^{(\fm)}, $$
and by removing the condition $\fa + \fm = (1)$, which
amounts to replacing $\fh_\gen$ by an equivalent ideal
group, we finally see
$$ \fh_\gen = \{ \fa = \fa_1 \fa_2 \fa_3 \sth
   N_1 \fa_1 \cdot N_2 \fa_2 \cdot N_3 \fa_3 = (a)
   \ \text{for some}\ a \in k\} \cdot H_K. $$
The corresponding class group is $J_K/\fh_\gen$, and this gives
$$ \Gal(K_\gen/K) \simeq \fh_\gen/H_K =
   \{c = c_1c_2c_3 \sth N_1 c_1 N_2 c_2 N_3 c_3 = 1\} = \hC.$$
Now (\ref{E1.3}) follows from Furuta's formula for the genus
class number.

\subsubsection*{Proof of Prop. \ref{P2.2}}
It remains to prove Prop. \ref{P2.2}; this result is due to
Pitti \cite{Pi1,Pi2,Pi3}, and similar observations have been
made by Leep \& Wadsworth \cite{LW1,LW2}. Our proof of (ii)
$\Lra$ (iii) goes back to Kubota \cite[Hilfssatz 14]{Kub1},
while (iii) $\Lra$ (i) has already been noticed by Scholz
\cite[p.  102]{Sch1}.

(i) $\Lra$ (ii) is just an application of Hasse's norm residue
theorem for cyclic extensions.

(ii) $\Lra$ (iii). Choose $\alpha_1 \in k_1$ and $\alpha_2 \in k_2$
with $N_1 \alpha_1 = N_2 \alpha_2 = r$. Since $\sigma\tau$ acts
non-trivially on $k_1$ and $k_2$, this implies
$(\alpha_1/\alpha_2)^{1+\sigma\tau} = 1$. Hilbert's Theorem 90
shows the existence of $\alpha \in K^\times$ such that
$\alpha_1/\alpha_2 = \alpha^{1-\sigma\tau}$. Now
$$ \alpha^{1-\sigma\tau} =
   \alpha^{1+\sigma}(\alpha^{1+\tau})^{-\sigma}
     \quad \text{and} \quad
   \alpha^{1+\sigma}/\alpha_1 = (\alpha^{1+\tau})^\sigma/\alpha_2
      \in k_1 \cap k_2 = k. $$
Put $a = \alpha^{1+\sigma}/\alpha_1$ and verify
$N_{K/k} \alpha = (\alpha^{1+\sigma})^{1+\tau} = ra^2$.

(iii) $\Lra$ (i) is a consequence of formula (9) in \S\,6 of
Part II of Hasse's Bericht \cite{HasB}, which says
$$ \Big(\frac{\beta\,,\, k_1k_2}{\fp}\Big) \ = \
   \Big(\frac{\beta\,,\, k_1}{\fp}\Big)
   \Big(\frac{\beta\,,\, k_2}{\fp}\Big). $$
Since $r = N_i(N^i \alpha)/a)$ for $i = 1, 2$, we see that
$r$ is a norm from $k_1$ and $k_2$, and Hasse's formula tells
us that $r$ is a norm residue in $k_1k_2 = K$ at every place.

Before we proceed with the computation of $\# \ker \hj$,
let us pause for a moment and look at Prop. \ref{P2.1} with
more care. The fact that $K_\gen$ is the class field of $k$
for the ideal group $N_{K/k} H_K^{(\fm)} \cdot H_{\fm}^{(1)}$
is well known for abelian $K/k$. Moreover, the principal
genus theorem of class field theory says that $K_\gen$ is
the class field of $K$ for the class group $\Cl(K)^{1-\sigma} =
\{c^{1-\sigma} \sth c \in \Cl(K)\}$ if $\Gal(K/k) = \la \sigma \ra$
is cyclic. If $K/k$ is abelian but not necessarily cyclic, the
class field $K_\cen$ for the class group $\{c^{1-\sigma} \sth
c \in \Cl(K), \sigma \in \Gal(K/k)\}$ is called the {\em central
class field}, and in general $K_\cen$ is strictly bigger than
$K_\gen$. A description of $K_\gen$ in terms of the ideal class
group of $K$ is unknown for non-cyclic $K/k$, and Prop. \ref{P2.1}
answers this question for the simplest non-cyclic group, the
four-group $V_4 \simeq (\Z/2\Z)^2$. For other non-cyclic groups,
this remains an open problem.

In the $V_4$-case, the fact that $\la c^{\sigma-1} \sth c \in \Cl(K),
\sigma \in \Gal(K/k) \ra \subseteq \im \hj$ can be verified
directly by noting that $c^{\sigma-1} = (c^\sigma)^{\sigma\tau+1}
\cdot (c^{-1})^{\tau+1} \in C_2 \times C_3$ is annihilated by $\nu$.

\subsection*{Proof of (\ref{E1.4})}

The calculation of $\# \ker \hj$ will be done in several steps.
We call an ideal $\fa_1$ in $k_1$ {\em ambiguous} if
$\fa_1^\tau = \fa_1$. An ideal class $c \in \Cl(k_1)$ is called
{\em ambiguous} if $c^\tau = c$, and {\em strongly ambiguous}
if $c = [\fa_1]$ for some ambiguous ideal $\fa_1$.  Let $A_i$
denote the group of strongly ambiguous ideal classes in $k_i$
($i = 1, 2, 3$). Then $A = A_1 \times A_2 \times A_3$ is a
subgroup of $C$, and $\hA = \hC \cap A$ is a subgroup of $\hC$.
The idea of the proof is to restrict $\hj$ (once more) from
$\hC$ to $\hA$ and to compute the kernel of this restriction
by using the formula for the number of ambiguous ideal classes.

In (\ref{E1.3}) we defined $H$ as the group of units in $E_k$
that are norm residues in $K/k$ at every place of $k$. Using
Prop. \ref{P2.2} we see that
$$ H = \{\eta \in E_k \sth \eta = N_i \alpha_i \ \text{for some}\
   \alpha_i \in k_i, i = 1, 2, 3\}. $$
Let $H_0 = E_1^N \cap E_2^N \cap E_3^N$ be the subgroup of $H$
consisting of those units that are relative norms of units for
every $k_i/k$. The computation of $\# \ker \hj$ starts with
the following observation:

\begin{lemma}\label{L2.3}
Let $j^*$ denote the restriction of $\hj$ to $\hA$; then
\begin{equation}\label{E2.3}
 \# \ker \hj = (H:H_0) \cdot \# \ker j^*.
\end{equation}
\end{lemma}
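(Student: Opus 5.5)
The plan is to construct a surjective homomorphism
$$\phi:\ \ker \hj \lra H/H_0$$
whose kernel is exactly $\ker \hj \cap \hA = \ker j^*$. The index formula (\ref{E2.3}) then follows at once.

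\emph{Construction of $\phi$.} Let $(c_1,c_2,c_3)\in\ker\hj$ and pick ideals $\fa_i$ in $k_i$ with $c_i=[\fa_i]$.
\begin{itemize}
\item Since $j(c_1,c_2,c_3)=1$, we have $\fa_1\fa_2\fa_3\OO_K=(\alpha)$ for some $\alpha\in K^\times$.
\item Since $(c_1,c_2,c_3)\in\hC$, we have $N_1\fa_1\,N_2\fa_2\,N_3\fa_3=(a)$ for some $a\in k^\times$.
\end{itemize}
Taking $N_{K/k}$ of the first relation gives $(N_{K/k}\alpha)=(a)^2$. Hence $\eta=N_{K/k}\alpha/a^2$ is a unit of $k$. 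For each $i$ we have $\eta=N_i(N^i\alpha/a)$, so $\eta$ is a norm from every $k_i/k$, and Prop.~\ref{P2.2} gives $\eta\in H$.

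\emph{Well-definedness modulo $H_0$.} There are three choices to check.
\begin{itemize}
\item Replacing $\alpha$ by $\alpha\eps$ with $\eps\in E_K$ multiplies $\eta$ by $N_{K/k}\eps=N_i(N^i\eps)$, which lies in $E_i^N$ for every $i$, hence in $H_0$.
\item Replacing $a$ by $au$ with $u\in E_k$ multiplies $\eta$ by $u^{-2}=N_i(u^{-1})\in H_0$.
\item Replacing $\fa_i$ by $\gamma_i\fa_i$ with $\gamma_i\in k_i^\times$ changes $\alpha$ by $\gamma_i$ and $a$ by $N_i\gamma_i$. Since $N_{K/k}\gamma_i=(N_i\gamma_i)^2$, this leaves $\eta$ unchanged.
\end{itemize}
Multiplicativity of $\phi$ is clear.

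\emph{Kernel.} The key identity comes from applying $N^1$ to $\fa_1\fa_2\fa_3=(\alpha)$. Since $N^1\fa_2=N_2\fa_2\OO_{k_1}$ and $N^1\fa_3=N_3\fa_3\OO_{k_1}$, this gives
$$\fa_1^2\,N_2\fa_2\,N_3\fa_3=(N^1\alpha),$$
that is, $\fa_1^{1-\tau}=(\beta_1)$ with $\beta_1=N^1\alpha/a$ and $N_1\beta_1=\eta$. The same holds for $i=2,3$.
\begin{itemize}
\item Suppose $\eta\in H_0$. For each $i$ write $\eta=N_iu_i$ with $u_i\in E_i$. Then $\beta_i/u_i$ has norm $1$, so Hilbert~90 gives $\beta_i/u_i=\gamma_i^{1-\tau_i}$, where $\tau_i$ is the nontrivial automorphism of $k_i/k$. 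Thus $\fa_i/\gamma_i$ is an ambiguous ideal and $c_i\in A_i$, so $(c_1,c_2,c_3)\in\hA$.
\item Conversely, if every $c_i\in A_i$, choose each $\fa_i$ ambiguous. Then $(\beta_i)=(1)$, so $\beta_i\in E_i$ and $\eta=N_i\beta_i\in E_i^N$ for all $i$, i.e.\ $\eta\in H_0$.
\end{itemize}
Hence $\ker\phi=\ker j^*$.

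\emph{Surjectivity.} This is the step I expect to need the most care. Let $\eta\in H$. By Prop.~\ref{P2.2}(iii), $\eta=b^2N_{K/k}\alpha$ with $b\in k$ and $\alpha\in K$. Then $N_{K/k}(\alpha)=(b^{-1})^2$ is the square of a principal ideal of $k$. This is exactly the situation treated in the proof of (\ref{E1.3}), with $\fb=(\alpha)$. The argument there uses that $2\mid m$, the identity $2=1+\sigma+\tau+\sigma\tau-(1+\sigma\tau)\sigma$, and extraction of the square root. It yields ideals $\fa_i$ of $k_i$ with $(\alpha)=\fa_1\fa_2\fa_3$ and $N_1\fa_1N_2\fa_2N_3\fa_3=(b^{-1})$.

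The resulting triple $([\fa_1],[\fa_2],[\fa_3])$ lies in $\ker\hj$. With $a=b^{-1}$, its image under $\phi$ is $N_{K/k}\alpha\cdot b^2=\eta$.

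The one subtlety is that the proof of (\ref{E1.3}) first removed factors coming from subfields and dealt with non-integral $\fb$. Both reductions go through verbatim here, since the resulting factors are themselves products of ideals from the $k_i$.
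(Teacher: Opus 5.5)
Your proof is correct and follows the paper's argument essentially step for step. It uses the same map $([\fa_1],[\fa_2],[\fa_3]) \mapsto N_{K/k}\alpha/a^2$, gets surjectivity from Prop.~\ref{P2.2}(iii) together with the ideal-decomposition argument in the proof of (\ref{E1.3}), and identifies the kernel with $\ker j^*$ via $\fa_i^{1-\tau_i} = (N^i\alpha/a)$ and Hilbert's Theorem 90. The only cosmetic difference is that the paper first maps onto $H/NE_K\cdot E_k^2$ and then projects to $H/H_0$, whereas you check well-definedness modulo $H_0$ directly.
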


Postponing the proof of Lemma \ref{L2.3} for a moment,
let us see how this implies (\ref{E1.4}). Let
$R = \{\fa_1\fa_2\fa_3 \sth \fa_i \in I_i\ \text{is ambiguous in}\
 k_i/k\}$ and $R_\pi = R \cap H_K$; then
\begin{equation}\label{E2.4}
  \# \ker j^* = \# A / (R:R_\pi).
\end{equation}
Now the computation of $\# \ker \hj$ is reduced to the determination
of $(H:H_0)$ and $(R:R_\pi)$; let $t = \# \Ram(K/k)$ denote the
number of (finite) prime ideals of $k$ that ramify in $K$, and
let $\lambda$ denote the $\Z$-rank of $E_K$. We will prove
\begin{equation}\label{E2.5}
  (R:R_\pi) = 2^{t+\kappa-\lambda-2-v} q(K) h_k \cdot
      \frac{\prod (E_i^N : E_k^2)}{(H_0:E_k^2)}.
\end{equation}
The number $\# A_i$ of strongly ambiguous ideal classes in
$k_i/k$ is given by the well known formula (cf. Hasse
\cite[Teil Ia, \S 13]{HasB}):

\begin{lemma}\label{P2.6}
We have
\begin{equation}\label{E2.6}
  \# A_i = 2^{\delta_i - \kappa - 2} h_k \cdot (E_i^N:E_k^2),
\end{equation}
where $\delta_i$ denotes the number of (finite and infinite) places
in $k$ that are ramified in $k_i/k$.
\end{lemma}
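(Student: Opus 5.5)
The plan is to count the strongly ambiguous classes through the exact sequence
$$ 1 \lra (\text{principal ambiguous ideals}) \lra (\text{ambiguous ideals}) \lra A_i \lra 1 .$$
This is the classical ambiguous class number computation for the quadratic extension $k_i/k$ with $\Gal(k_i/k) = \la \tau \ra$. Let $I_i^\tau$ denote the group of ambiguous (fractional) ideals of $k_i$, and $P_i^\tau = I_i^\tau \cap H_{k_i}$ the ambiguous principal ideals. Then $A_i \simeq I_i^\tau/P_i^\tau$, and I would compute $\# A_i$ as the index $(I_i^\tau : P_i^\tau)$ by comparing both groups with $J_k$, the ideals extended from $k$.

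First, $I_i^\tau$ is generated by $J_k$ together with the ramified primes $\fP$ of $k_i$ above the finite ramified primes $\fp$ of $k$. Since $\fP^2 = \fp\OO_{k_i}$, this gives $(I_i^\tau : J_k) = 2^{t_i}$, where $t_i$ is the number of finite primes of $k$ ramified in $k_i/k$. Second, we have the chain $J_k \supseteq H_k$ (ideals of $k$ extended to $k_i$), where the extension map $J_k \to J_{k_i}$ is injective. From the chain $I_i^\tau \supseteq J_k \supseteq H_k$ and $P_i^\tau \supseteq H_k$ we get
$$ (I_i^\tau : P_i^\tau) = \frac{2^{t_i}\, h_k}{(P_i^\tau : H_k)} .$$

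The main work is computing $(P_i^\tau : H_k)$, the number of ambiguous principal ideals modulo principal ideals from $k$. For $(\alpha)$ ambiguous, $\alpha^{\tau-1}$ is a unit of relative norm $1$. This gives a map $P_i^\tau \to E_i[N]/E_i^{\tau-1}$, where $E_i[N]$ is the group of units of norm $1$ to $k$. Its kernel consists of those $(\alpha)$ with $\alpha^{\tau-1} = u^{\tau-1}$, i.e.\ $\alpha/u \in k$, so the kernel is $H_k$. Every unit of norm $1$ is, by Hilbert 90, of the form $\beta^{\tau-1}$ with $(\beta)$ ambiguous, so the map is onto. Hence
$$(P_i^\tau:H_k) = (E_i[N] : E_i^{1-\tau}) = H^1(\la\tau\ra, E_i) = \hat H^{-1}(\la\tau\ra, E_i).$$

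Now I would use the Herbrand quotient of the unit group in a cyclic extension of degree $2$. We have $\hat H^0 = E_k/E_i^N$ and $h(E_i) = \#\hat H^0/\#\hat H^{-1} = 2^{r}/2$, where $r$ is the number of infinite places of $k$ ramified in $k_i$ (by Herbrand's unit theorem, $h(E_i) = \prod_{v\mid\infty} e_v / (k_i:k)$). Therefore
$$ \#\hat H^{-1} = \frac{2\,(E_k:E_i^N)}{2^{r}} = \frac{2\,(E_k:E_k^2)}{2^{r}(E_i^N:E_k^2)} = \frac{2^{\kappa+2}}{2^{r}(E_i^N:E_k^2)} ,$$
using $E_k^2 = N_i E_k \subseteq E_i^N$ and $(E_k:E_k^2) = 2^{\kappa+1}$. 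Substituting,
$$ \#A_i = 2^{t_i + r - \kappa - 2}\, h_k\, (E_i^N : E_k^2), $$
and $t_i + r = \delta_i$, which is the claim.

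The main obstacle is the unit-cohomology step. I expect to have to verify the Herbrand quotient value carefully, in particular that real places becoming complex contribute exactly the factor $2^r$, and to check that the identification of $(P_i^\tau:H_k)$ with $\hat H^{-1}(E_i)$ is correct, including when roots of unity are present, where no special care should be needed since everything is phrased cohomologically. Alternatively one can simply cite Hasse \cite[Teil Ia, \S 13]{HasB}, as the paper indicates.
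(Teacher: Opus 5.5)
Your proof is correct. Each step checks out:
\begin{itemize}
\item $A_i \simeq I_i^\tau/P_i^\tau$ and $(I_i^\tau:H_k)=2^{t_i}h_k$.
\item The map $(\alpha)\mapsto \alpha^{\tau-1}E_i^{1-\tau}$ is well defined, since changing the generator by a unit changes the image by an element of $E_i^{\tau-1}$. Its kernel is exactly $H_k$, and it is onto by Hilbert's Theorem~90.
\item Herbrand's unit theorem gives $\#\hat H^0/\#\hat H^{-1}=2^{r}/2$, and you have the orientation right: for $\Q(\sqrt2\,)/\Q$ one has $\#\hat H^0=1$ and $\#\hat H^{-1}=2$.
\end{itemize}

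The paper itself gives no proof of this lemma. It quotes the formula from Hasse's Bericht (Teil Ia, \S 13), which is the fallback you mention at the end. Your route is therefore a genuine addition rather than a different version of the paper's argument.

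It makes explicit that the lemma is the strongly ambiguous class number formula $\#A_i=2^{\delta_i-1}h_k/(E_k:E_i^N)$, rewritten via $E_k^2=N_iE_k\subseteq E_i^N$ and $(E_k:E_k^2)=2^{\kappa+1}$. It also isolates the single non-elementary input, the Herbrand quotient of the unit group, whose proof needs a Dirichlet/Minkowski-type Galois-stable lattice of units.

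The citation keeps the paper short. The cost is that the reader must translate Hasse's normalization into the paper's $2^{\delta_i-\kappa-2}h_k(E_i^N:E_k^2)$ unaided, and your computation does that translation explicitly.
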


Once we know how the $\delta_i$ are related to $t$, $\kappa$,
$\lambda$, etc., we will be able to deduce (\ref{E1.4}) from
(\ref{E2.4}) -- (\ref{E2.6}). To this end, let $t_i$ be the
``finite part'' of $\delta_i$, i.e., the number
$\Ram(k_i/k)$ of prime ideals in $k$ ramified in $k_i/k$, and
let $d_i$ denote the infinite part. Then $\delta_i = d_i + t_i$,
and
\begin{equation}\label{E2.7}
  2^{t_1+t_2+t_3} = 2^t \prod e(\fp), \quad
  2d = d_1 + d_2 + d_3, \quad \text{and} \quad
  \lambda - 4\kappa = 3-2d.
\end{equation}
Since $\# A = \prod \# A_i$, we obtain from (\ref{E2.4})
and (\ref{E2.6})
$$ \# A = 2^{\delta_1 + \delta_2 + \delta_3 - 3\kappa - 6}
    h_k^3 \cdot \prod (E_i^N : E_k^2); $$
dividing by (\ref{E2.5}) yields
$$ \# \ker j^* = 2^{t_1+t_2+t_3-t+d_1+d_2+d_3 + \lambda
   - 4\kappa - 4 + v} h_k^2 \cdot (H_0 : E_k^2)/q(K), $$
and using (\ref{E2.7}) we find
$$ \# \ker j^* = 2^{v-1} h_k^2 \prod e(\fp) \cdot
   (H_0 : E_k^2)/q(K). $$
Substituting this formula into equation (\ref{E2.3})
we finally obtain (\ref{E1.4}).

\begin{proof}[Proof of Lemma \ref{L2.3}]

In order to prove (\ref{E2.3}) let
$([\fa_1], [\fa_2], [\fa_3]) \in \ker \hj$; then
$\fa_1\fa_2\fa_3 = (\alpha)$ for some $\alpha \in K^\times$.
Since $(N_{K/k} \alpha) = (N_1\fa_1 \cdot N_2 \fa_2 \cdot
N_3 \fa_3)^2$ (equality of ideals in $\OO_k$) and because
$([\fa_1], [\fa_2], [\fa_3]) \in \hC$, there exists
$a \in k$ such that $(N_{K/k} \alpha) = (a)^2$. This shows
that $\eta = (N_{K/k} \alpha)/a^2$ is a unit in $E_k$, which
is unique mod $NE_K \cdot E_k^2$. Moreover, $\eta \in H$ since
$\eta = N_i((N^i \alpha)/a)$. Therefore
$$ \theta_0: \ker \hj \lra H/NE_K \cdot E_k^2, \quad
   ([\fa_1], [\fa_2], [\fa_3]) \too \eta NE_K \cdot E_k^2,$$
is a well defined homomorphism. We want to show that $\theta_0$
is onto: to this end, let $\eta \in H$; using Prop. \ref{P2.2} we
can find an $a \in k$ such that $N_{K/k}\alpha = \eta a^2$. In
the proof of Prop. \ref{P2.1} we have seen that an equation
$N_{K/k} \fa = (a)^2$ implies the existence of ideals $\fa_i$
in $k_i$ such that $\fa = \fa_1\fa_2\fa_3$. This gives
$(\alpha) = \fa_1\fa_2\fa_3$.

Now $(N_1\fa_1 \cdot N_2\fa_2 \cdot N_3\fa_3)^2 =
  (N_{K/k} \alpha) = (a)^2$ yields
$(a) = (N_1\fa_1 \cdot N_2\fa_2 \cdot N_3\fa_3)$, and
we have shown $\eta \in \im \theta_0$.

Since $\theta_0: \ker \hj \lra H/NE_K \cdot E_k^2$
is onto, the same is true for any homomorphism
$\ker \hj \lra H/H_0$ that is induced by an inclusion
$NE_K \cdot E_k^2 \subseteq H_0 \subseteq H$. Obviously,
the group $H_0 = E_1^N \cap E_2^N \cap E_3^N$ defined above
is such a group, and so $\theta: \ker \hj \lra H/H_0$ is onto.
An element $([\fa_1], [\fa_2], [\fa_3]) \in \ker \hj$ belongs
to $\ker \theta$ if and only if

$$ \fa_1\fa_2\fa_3 = (\alpha),  \quad
   (a) = N_1\fa_1 \cdot N_2\fa_2 \cdot N_3\fa_3, \quad
    (N_{L/K} \alpha)/a^2 = \eta \in H_0. $$

Let $\rho_i = (N^i\alpha)/a$; then $\fa_1^{1-\tau} = (\rho_1)$,
$\fa_2^{1-\sigma\tau} = (\rho_2)$, $\fa_3^{1-\sigma} = (\rho_3)$
and $N_i\rho_i = \eta \in H_0$. Writing $\eta = N_i\eps_i$,
where $\eps_i \in E_i$, and replacing $\rho_i$ by $\rho_i/\eps_i$,
we may assume that $N_i\rho_i = 1$. Hilbert's Theorem 90 shows
$\rho_1 = \beta_1^{1-\tau}$, $\rho_2 = \beta_2^{1-\sigma\tau}$,
and $\rho_3 = \beta_3^{1-\sigma}$ for some $\beta_i \in k_i$.
The ideals $\fb_i = \fa_u \beta_i^{-1}$ are ambiguous, and we
have $[\fb_i] = [\fa_i]$. This means that the ideal classes
$[\fa_i]$ are strongly ambiguous, and we conclude
$$ \ker \theta \subseteq
   \ker \hj \cap A_1 \times A_2 \times A_3 = \ker j^*. $$
If, on the other hand, $([\fa_1], [\fa_2], [\fa_3]) \in \ker \hj$
and if the ideals $\fa_i$ are ambiguous, then the
$\rho_i = (N^i \alpha)/a$ are units, and
$$ \eta = \theta([\fa_1], [\fa_2], [\fa_3]) = N_i\rho_i
   \in E_1^N \cap E_2^N \cap E_3^N = H_0. $$

We have seen that $\ker \theta = \ker j^*$, which shows
that the sequence
$$ \begin{CD}
  1 @>>> \ker j^* @>>> \ker \hj @>{\theta}>> H/H_0 @>>> 1
  \end{CD} $$
is exact; (\ref{E2.3}) follows at once.
\end{proof}

\subsubsection*{Proof of (\ref{E2.4})}
The proof of (\ref{E2.4}) will be done in two steps. First we
notice that $\im j^*$ consists of those ideal classes
in $j(\hC)$ that are generated by ambiguous ideals in $k_i/k$.
Define
\begin{eqnarray*}
   R  & = & \{ \fA \sth \fA = \fa_1 \fa_2 \fa_3, \fa_i \in J_i
  \ \text{ambiguous}\}, \\
  \hR & = & \{ \fA \sth \fA = \fa_1 \fa_2 \fa_3, \fa_i \in J_i
  \ \text{ambiguous},\ \nu([\fa_1], [\fa_2], [\fa_3]) = 1\},
\end{eqnarray*}
and let $\pi$ be the homomorphism mapping $\fA \in \hR \subseteq J_K$
to $[\fA] \in \Cl(K)$. Then $\pi: \hR \lra \im j^*$ is obviously
onto, and $\ker \pi = \hR \cap H_K$. But if $\rho \in K$ and
$(\rho) = \fa_1\fa_2\fa_3 \in \hR$, then
$$ (\rho)^2 = (\fa_1\fa_2\fa_3)^2 =
    (N\fa_1 \cdot N\fa_2 \cdot N\fa_3) = (r)$$
for some $r \in k$. This shows
$$ \ker \pi = \{(\rho) \sth \rho \in K, (\rho)^2 = (r) \
   \text{for some}\ r \in k\} = R_\pi,$$
therefore
$$ (\hR : R_\pi) = \# \im \pi = \# \im j^* = (\hA : \ker j^*),$$
which is equivalent to
\begin{equation}\label{E2.8}
 \# \ker j^* = \frac{\# \hA}{(\hR : R_\pi)}.
\end{equation}

The homomorphism $\nu: C \lra \Cl(k)$ defined at the beginning
of Section \ref{S2} sends
$([\fa_1], [\fa_2], [\fa_3]) \in A = A_1 \times A_2 \times A_3
  \subseteq C$ to $[\fa_1\fa_2\fa_3]^2 \in \Cl(k)$ (remember
that the square of an ambiguous ideal of $k_i/k$ is an ideal
in $\OO_k$), and we see that
$$ \begin{CD}
   1 @>>> \hA @>>> A @>{\nu}>> A_1^2A_2^2A_3^2 @>>> 1
   \end{CD} $$
is a short exact sequence. Now
$$ \begin{CD}
   1 @>>> \hR @>>> R @>{\bnu}>> A_1^2A_2^2A_3^2 @>>> 1,
   \end{CD} $$
where $\bnu(\fa_1\fa_2\fa_3) = \nu([\fa_1], [\fa_2], [\fa_3])
  = [\fa_1\fa_2\fa_3]^2$, is also exact.
From these facts we conclude that $(A:\hA) = (R:\hR)$, and
this allows us to transform (\ref{E2.8}):
$$ \# \ker j^* = \frac{\# \hA}{(\hR:R_\pi)} =
   \frac{(A:\hA) \cdot \# \hA}{(R:\hR)(\hR:R_\pi)} =
   \frac{\# A}{(R:R_\pi)}. $$
This is just (\ref{E2.4}).

Next we determine $(R:R_\pi)$. To this end, let $(\rho) \in R_\pi$.
Then $(\rho)^2 = (r)$ for some $r \in k^\times$, and $\eta = \rho^2/r$
is a unit in $\OO_K$. Since the ideal $(\rho)$ is fixed by
$\Gal(K/k)$, $\eta_i = (N^i \rho)/r$ is a unit in $E_i$. If
$\sigma \in \Gal(K/k)$ is an automorphism that acts nontrivially
on $k_3/k$, we find that $\eta = \eta_1\eta_2\eta_3^{-\sigma}
\in E_1E_2E_3$, where
$$ N_1 \eta_1 = N_2 \eta_2 = N_3 (\eta_3^{-\sigma})
   = (N_{K/k} \rho)/r^2. $$
The unit $\eta$ we have found is determined up to a factor
$\in E_kE^2$ (from now on, the unit group $E_K$ will appear
quite often, so we will write $E$ instead of $E_K$), and we
can define a homomorphism $\varphi: R_\pi \lra E/E_kE^2$ by
assigning the class of the unit $\eta = \rho^2/r$ to an
ideal $(\rho) \in R_\pi$ that satisfies $(\rho)^2 = (r)$,
$r \in k^\times$.  We cannot expect $\varphi$ to be onto because
only those units $\eta_1\eta_2\eta_3 \in E_1E_2E_3$ can
lie in the image of $\varphi$ whose norms $N_i \eta_i$ coincide.
Therefore we define
$$ E^* = \{ e_1e_2e_3 \sth e_i \in E_i,
  N_1e_1 \equiv N_2 e_2 \equiv N_3  e_3 \bmod E_k^2\} $$
and observe that $\im \varphi \subseteq E^*/E_kE^2$. Moreover,

\begin{lemma}\label{L2.9}
For $\eta = e_1e_2e_3 \in E^*$, the extension
$K(\sqrt{\eta}\,)/k$ is normal with elementary abelian
Galois group $\Gal(K(\sqrt{\eta}\,)/k)$, and there are
$\rho \in K^\times$ and $r \in k^\times$ such that
$\eta = \rho^2/r$.
\end{lemma}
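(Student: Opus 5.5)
The plan is to first show that the square class of $\eta$ in $K^\times/K^{\times2}$ is $\Gal(K/k)$-invariant, with explicit square roots; then to compute the Galois group of $L = K(\sqrt{\eta}\,)/k$ directly; and finally to obtain $\rho$ and $r$ from Kummer theory over $k$. We may assume $\eta \notin K^{\times2}$, since otherwise everything is trivial (take $r = 1$).

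\emph{Invariance.} Write $N_ie_i = n\,a_i^2$ with a common $n \in E_k$ and $a_i \in E_k$; this is exactly the hypothesis $\eta \in E^*$. Let $\sigma$ fix $k_1$. Then $\sigma$ acts nontrivially on $k_2$ and $k_3$, so $e_2^{1+\sigma} = N_2e_2$ and $e_3^{1+\sigma} = N_3e_3$. This gives
$$ \eta^{1+\sigma} = e_1^2\, N_2e_2\, N_3e_3 = \beta_1^2, \qquad \beta_1 = e_1\, n\, a_2a_3 \in k_1 . $$
In the same way $\eta^{1+\tau} = \beta_2^2$ with $\beta_2 = e_2 n a_1a_3 \in k_2$, and $\eta^{1+\sigma\tau} = \beta_3^2$ with $\beta_3 = e_3 n a_1 a_2 \in k_3$. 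In particular $\eta^{g} \in \eta^{-1}K^{\times2} = \eta K^{\times 2}$ for every $g \in \Gal(K/k)$. Hence $K(\sqrt{\eta}\,)$ is stable under every extension of $g$ to $\overline{k}$, so $L/k$ is normal of degree $8$.

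\emph{Structure of $\Gal(L/k)$.} Let $s$ be a lift of $\sigma$. From $\eta^{\sigma} = \beta_1^2/\eta$ we get $s(\sqrt{\eta}\,) = \pm\beta_1/\sqrt{\eta}$. Since $\beta_1 \in k_1$ is fixed by $\sigma$,
$$ s^2(\sqrt{\eta}\,) = \pm\,\beta_1^{\sigma}\big/\bigl(\pm\beta_1/\sqrt{\eta}\,\bigr) = \sqrt{\eta}, $$
so $s^2 = 1$; likewise for lifts of $\tau$ and $\sigma\tau$. A group of order $8$ in which all elements have order at most $2$ is elementary abelian. To make this airtight I will also check directly that lifts $s$ of $\sigma$ and $t$ of $\tau$ commute on $\sqrt{\eta}$. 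Both $st(\sqrt{\eta}\,)$ and $ts(\sqrt{\eta}\,)$ are $\pm$ expressions in $\beta_1, \beta_2, \eta$ and their conjugates. Comparing them reduces to an identity of the form $\beta_2^{\sigma}\beta_1 = \pm\,\beta_1^{\tau}\beta_2$, up to the factor $\sqrt{\eta}\sqrt{\eta^{\sigma\tau}}$. This identity should follow from $\beta_1\beta_2 \in k_3$-type relations, using $\beta_1^{\tau} = \beta_1^{\sigma\tau}$ on $k_1$ and the explicit forms above. This bookkeeping of signs and conjugates is where I expect the only real difficulty. If it becomes messy, I will instead rely on the order-$2$ argument just given, which already suffices.

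\emph{Conclusion.} $L/k$ is now an elementary abelian $2$-extension. By Kummer theory over $k$ (which contains $\pm1$), $L = k(\sqrt{r_1},\dots,\sqrt{r_m}\,)$ with $r_j \in k^\times$. Since $L = K(\sqrt{\eta}\,) \ne K$ and $K$ is itself generated by square roots of elements of $k$, some $r = r_j$ satisfies $\sqrt{r} \notin K$, and then $K(\sqrt{r}\,) = L = K(\sqrt{\eta}\,)$. Kummer theory over $K$ then gives $\eta r \in K^{\times2}$. Hence $\eta r = \rho'^2$ for some $\rho' \in K^\times$. Replacing $r$ by $r^{-1}$ (which changes $r$ only by the square $r^{-2}$ of an element of $k$, and adjusting $\rho$ accordingly) gives $\eta = \rho^2/r$ with $\rho \in K^\times$ and $r \in k^\times$, as claimed.
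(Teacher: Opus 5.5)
Your proof is correct and follows the paper's route. The paper's $\alpha_\sigma$ with $\eta^{1-\sigma}=\alpha_\sigma^2$ is exactly your $\eta/\beta_1$, and its check $\alpha_\sigma^{1+\sigma}=1$ is your $\beta_1^\sigma=\beta_1$; both arguments then conclude by Kummer theory that $K(\sqrt{\eta}\,)=K(\sqrt{r}\,)$ for some $r\in k^\times$.

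Two simplifications. The order-$2$ argument for all lifts, including the nontrivial element of $\Gal(L/K)$, already suffices, so you can drop the commutation check; its sketched justification via ``$\beta_1\beta_2\in k_3$'' does not hold as stated anyway. The final replacement $r\mapsto r^{-1}$ is also unnecessary, since $\eta r=\rho'^2$ already reads $\eta=\rho'^2/r$.
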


\begin{proof}
$K(\sqrt{\eta}\,)/k$ is normal if and only if for every
$\sigma \in \Gal(K/k)$ there exists an $\alpha_\sigma \in K^\times$
such that $\eta^{1-\sigma} = \alpha_\sigma^2$. Let
$\Gal(K/k) = \{1,\sigma, \tau, \sigma\tau\}$ and suppose
that $\sigma$ fixes $k_1$; then
$$ \eta^{1-\sigma} = (e_1e_2e_3)^{1-\sigma}
   = (e_2e_3)^{1-\sigma} = (e_2e_3)^2/(N_2e_2 \cdot N_3e_3),$$
and this is a square in $K^\times$ since
$N_2e_2 \equiv N_3e_3 \bmod E_k^2$.

It is an easy exercise to show that $\Gal(K(\sqrt{\eta}\,)/k)$
is elementary abelian if and only if $\alpha_\sigma^{1+\sigma}
 = \alpha_\tau^{1+\tau} = \alpha_{\sigma\tau}^{1+\sigma\tau} = +1$.
In our case, these equations are easily verified (for example
$\alpha_\sigma = e_2e_3/e$ for some $e \in E_k$ such that
$e^2 = N_2e_2 \cdot N_3 e_3$, and therefore
$\alpha_\sigma^{1+\sigma} = (N_2e_2 \cdot N_3e_3)/e^2 = +1$).

Now $K(\sqrt{\eta}\,)/k$ is elementary abelian, and so
$k(\sqrt{\eta}\,) = k(\sqrt{r}\,)$ for some $r \in k^\times$.
This implies the existence of $\rho \in k^\times$ such that
$\rho^2 = \eta r$.
\end{proof}

Because of Lemma \ref{L2.9}, $\varphi: R_\pi \lra E^*/E_kE^2$
is onto. Moreover,
\begin{eqnarray*}
  \ker \varphi & = & \{ (\rho) \in R_\pi \sth
           \rho^2/r = ue^2, u \in E_k, e \in E\} \\
         & = & \{ (\rho) \in R_\pi \sth
           \exists\ r \in k^\times, e \in E: (\rho/e)^2 = r\} \\
         & = & \{ (\rho) \in R_\pi \sth
           \rho^2 = r\ \text{for}\ r \in k^\times \}.
\end{eqnarray*}
Let $R_0 = \ker \varphi$; the group of principal ideals $H_k$ is a
subgroup of $R_0$, and it has index $(R_0:H_k) = 2^{2-u}$, where
$2^u = (E^{(2)}:E_k)$ and $E^{(2)} = \{e \in E: e^2 \in E_k\}$.
The proof is very easy: let $\Lambda = \{\rho \in K^\times \sth
\rho^2 \in k^\times\}$ and map $\Lambda/k^\times$ onto $R_0/H_k$
by sending $\rho\cdot k^\times$ to $(\rho)\cdot H_k$. The sequence
$$ \begin{CD}
   1 @>>> E^{(2)}k^\times/k^\times @>>> \Lambda/k^\times
     @>>> R_0/H_k @>>> 1 \end{CD} $$
is exact, and since $\Lambda/k^\times$ has order $4$
($\Lambda/k^\times = \{k^\times, \sqrt{a} \cdot k^\times,
 \sqrt{b}\cdot k^\times, \sqrt{ab} \cdot k^\times \}$ for
$K = k(\sqrt{a}, \sqrt{b}\,)$) and
$E^{(2)} k^\times/k^\times \simeq E^{(2)}/E_k$, the claim
follows. We see
$$ (R_0 : H_k) = \begin{cases}
   1 & \text{if we can choose $a, b \in E_k$}, \\
   2 & \text{if we can choose $a \in E_k$ or $b \in E_k$,
             but not both}, \\
   4 & \text{otherwise}. \end{cases} $$

Now we find $(R : H_k) = (R:J_k)(J_k:H_k) = 2^t h_k$,
where $t = \# \Ram(K/k)$, and
$$ (R:R_\pi) \ = \ \frac{(R:H_k)}{(R_\pi:R_0)(R_0:H_k)}
      \ = \ 2^{t-2} h_k \frac{(E^{(2)}:E_k)}{(E^*:E_kE^2)}. $$
Since
\begin{eqnarray*}
  (E:E_kE^2)   & = & \frac{(E:E^2)}{(E_kE^2:E^2)}, \\
  (E_kE^2:E^2) & = & (E_k:E^2\cap E_k) \ = \
        \frac{(E_k:E^2)}{(E^2 \cap E_k : E_k^2)} \quad \text{and} \\
  (E^2 \cap E_k : E_k^2) & = & (E^{(2)} : E_k),
\end{eqnarray*}
we get $(E:E_kE^2) = 2^{\lambda-\kappa} (E^{(2)}:E_k)$, where
$\lambda$ and $\kappa$ denote the $\Z$-ranks of $E$ and $E_k$,
respectively. Collecting everything, we find
\begin{eqnarray*}
  (R:R_\pi) & = & 2^t h_k \frac1{(E^*:E_kE^2)(R_0:H_k)}
    \ = \ 2^t h_k \frac{(E:E^*)(E^{(2)}:E_k)}{4(E:E_kE^2)} \\
    & = & 2^{t+\kappa-\lambda-2} h_k (E:E^*).
\end{eqnarray*}
But $(E:E^*) = (E:E_1E_2E_3)(E_1E_2E_3:E^*)$, and the first
factor is the unit index $q(K)$; this shows
\begin{equation}\label{E2.10}
  (R:R_\pi) \ = \ 2^{t+\kappa-\lambda-2} q(K) h_k (E_1E_2E_3:E^*).
\end{equation}

In order to study the group $E_1E_2E_3/E^*$, we define
$E_i^* = \{e_i \in E_i \sth N_ie_i \in E_k^2\}$ and notice that
$E_1^*E_2^*E_3^* \subseteq E^* \subseteq E_1E_2E_3 \subseteq E$.
The group $E^*/E_1^*E_2^*E_3^*$ is actually one we have encountered
before:

\begin{lemma}
We have
\begin{equation}\label{E2.11}
 E^*/E_1^*E_2^*E_3^* \simeq H_0/E_k^2.
\end{equation}
\end{lemma}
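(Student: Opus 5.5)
The plan is to write down an explicit surjective homomorphism $\psi: E^* \lra H_0/E_k^2$ and to show that its kernel is $E_1^*E_2^*E_3^*$.

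\emph{Definition of $\psi$.} For $\eta = e_1e_2e_3 \in E^*$ with $e_i \in E_i$, the norms $N_1e_1, N_2e_2, N_3e_3$ all coincide modulo $E_k^2$. We therefore set $\psi(\eta) = N_1e_1 \cdot E_k^2$. Each $N_ie_i$ lies in $E_i^N$, so the common class is represented by an element of each $E_i^N$ separately. Since $E_k^2 = N_i E_k \subseteq E_i^N$, this class actually lies in $E_1^N \cap E_2^N \cap E_3^N = H_0$, taken modulo $E_k^2$.

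\emph{Well-definedness.} This is the one delicate point, because the decomposition $\eta = e_1e_2e_3$ is not unique. Suppose $e_1e_2e_3 = e_1'e_2'e_3'$, and put $f_i = e_i'/e_i$, so that $f_1f_2f_3 = 1$ with $f_i \in E_i$. Then $f_1 = (f_2f_3)^{-1}$. Applying $N^1$ (the norm of $K/k_1$) gives
\[
f_1^2 = N^1 f_1 = (N_2f_2 \cdot N_3f_3)^{-1},
\]
because $N^1$ restricted to $k_2$ equals $N_2$, and likewise for $k_3$. Next apply $N_1$ to this identity:
\[
(N_1f_1)^2 = (N_2f_2\, N_3f_3)^{-2}.
\]
Hence $N_1f_1 = \pm (N_2f_2\, N_3f_3)^{-1}$, where the sign is a root of unity in $k$. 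This is not quite enough, so I would argue more carefully. Since $f_1^2 = (N_2f_2\, N_3f_3)^{-1}$ lies in $E_k$, we get $N_1f_1 = f_1^{1+\tau}$ with $f_1^\tau = \pm f_1$. Thus $N_1 f_1 = \pm f_1^2 \in \pm E_k$, and one still has to check that this is $\equiv 1 \bmod E_k^2$.

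A cleaner route, which I will try first, avoids this sign issue. Since $\eta \in E^*$ means $N_1e_1 \equiv N_2e_2 \equiv N_3e_3$, I compute
\[
N_{K/k}\eta = N_1e_1^2 \cdot N_2e_2^2 \cdot N_3e_3^2 = (N_1e_1 N_2e_2 N_3e_3)^2 .
\]
This identifies $N_1e_1 N_2e_2 N_3e_3 \equiv (N_1e_1)^3 \equiv N_1e_1 \bmod E_k^2$ up to sign, i.e. up to $\pm1$. So $\psi$ is determined by $\eta$ modulo the class of $-1$, and I expect the remaining sign ambiguity to be the main obstacle. I would resolve it by taking square roots inside $E_k$ with the sign fixed by the product formula, and if necessary by checking directly that a relation $f_1f_2f_3 = 1$ forces $N_1f_1 \equiv 1 \bmod E_k^2$, using $f_1^\tau = \pm f_1$ together with $f_1^2 \in E_k$. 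Once well-definedness is settled, $\psi$ is a homomorphism by multiplicativity of the norms.

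\emph{Surjectivity.} Given $h \in H_0$, write $h = N_ie_i$ with $e_i \in E_i$ for $i = 1,2,3$. Then $\eta = e_1e_2e_3$ lies in $E^*$, since all three norms are equal, and $\psi(\eta) = h E_k^2$.

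\emph{Kernel.} Clearly $E_1^*E_2^*E_3^* \subseteq \ker\psi$. Conversely, suppose $\psi(\eta) = 1$. Then $N_1e_1 \in E_k^2$, and because the three norms agree modulo $E_k^2$, also $N_2e_2, N_3e_3 \in E_k^2$. Hence each $e_i \in E_i^*$, so $\eta \in E_1^*E_2^*E_3^*$. This yields the isomorphism (\ref{E2.11}).
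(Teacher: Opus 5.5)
Your map, the surjectivity argument and the kernel computation are exactly what the paper's one-line proof intends. You are also right that well-definedness is the delicate point; the paper passes over it in silence. But you leave it unresolved, and both repairs you propose would fail.

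A bare relation $f_1f_2f_3=1$ with $f_i\in E_i$ does \emph{not} force $N_1f_1\in E_k^2$. Take the paper's example in Section~3 with $k=\Q(\sqrt3)$, labelled as $k_1=k(\sqrt2)$, $k_2=k(i)$, $k_3=k(\sqrt{-2})$, and with the square roots chosen there. Then $i\cdot\sqrt{\varepsilon_3}\cdot\sqrt{-\varepsilon_3}=\varepsilon_3$. Hence $f_1=\sqrt{\varepsilon_3}$, $f_2=i\varepsilon_3^{-1}$, $f_3=\sqrt{-\varepsilon_3}$ satisfy $f_1f_2f_3=1$. Yet $N_1f_1=-\varepsilon_3\notin E_k^2=\langle\varepsilon_3^2\rangle$.

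Your ``cleaner route'' through $N_{K/k}\eta=(N_1e_1N_2e_2N_3e_3)^2$ cannot fix the sign either, because $\psi$ is not a function of $N_{K/k}\eta$. For $k=\Q$ and $K=\Q(\sqrt2,\sqrt5)$, the element $(1+\sqrt2)\cdot\frac{1+\sqrt5}{2}\cdot(3+\sqrt{10})$ has norm $1$, like the element $1$, but its $\psi$-value is $-1\neq 1$.

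The missing idea is to use that \emph{both} triples $(e_i)$ and $(e_i')$ satisfy the congruences defining $E^*$. Then $N_1f_1\equiv N_2f_2\equiv N_3f_3\bmod E_k^2$. Your own identity $f_1^2=(N_2f_2\,N_3f_3)^{-1}$ then gives $f_1^2\equiv(N_1f_1)^{-2}\equiv 1\bmod E_k^2$. Write $f_1^2=c^2$ with $c\in E_k$. Then $f_1=\pm c\in E_k$, so $N_1f_1=f_1^2\in E_k^2$, and there is no sign issue at all.

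In the example above, the modified triple has norm classes multiplied by $-\varepsilon_3,1,\varepsilon_3$. So it is not admissible, which is why there is no contradiction. With this step inserted, the rest of your proof is correct and coincides with the paper's.
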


\begin{proof}
Map $e_1e_2e_3 \in E^*$ to the coset
$N_1e_1 E_k^2 = N_2e_2 E_k^2 = N_3e_3 E_k^2$.
\end{proof}

It is therefore sufficient to compute the index
$(E_1E_2E_3 : E^*/E_1^*E_2^*E_3^*)$; to this end we introduce
the natural homomorphism
$$ \xi: E_1/E_1^* \times E_2/E_2^* \times E_3/E_3^*
    \lra E_1E_2E_3 / E_1^*E_2^*E_3^*, $$
which, of course, is onto. Letting $\be_1$ denote the coset
$e_i E_i^*$ we find
$$ \ker \xi = \{(\be_1, \be_2, \be_3): e_1e_2e_3 = u_1u_2u_3 \
   \text{for some}\ u_i \in E_i^*\}.$$
We need to characterize $\ker \xi$. Assume that
$(\be_1, \be_2, \be_3) \in \ker \xi$; then $e_1e_2e_3 = u_1u_2u_3$
for some $u_i \in E_i^*$. Replacing the $\be_i = e_i E_i^*$
by $e_iu_i^{-1} E_i^*$ if necessary we may assume that
$e_1e_2e_3 = 1$. Applying $1+\sigma$ to this equation (where
$\sigma$ fixes $k_1$) yields $e_1^2 N_2e_2 N_3e_3 = 1$, and
this implies $e_2^2 \in E_k$; in a similar way we find $e_2^2 \in E_k$
and $e_3^2 \in E_k$. If $N_2e_2$ were a square in $E_k$, so were
$N_3e_3$, and $e_1$ would have to lie in $E_k$: but then
$e_i \in E_i^*$ for $i = 1, 2, 3$, and $(\be_1, \be_2, \be_3)$
is trivial. So if $\ker \xi \ne 1$, we must have
$e_i \in E_i \setminus E_k$ for $i = 2, 3$; but we have seen
$e_i^2 =: \eps_i \in E_k$, so we get $k_i = k(\sqrt{\eps_i}\,)$
for $i = 2, 3$ and, therefore, $k_1 = k(\sqrt{\eps_2\eps_3}\,)$.
Moreover,
$$ \ker \xi = \{1,
(\sqrt{\eps_1}\cdot E_1^*, \sqrt{\eps_2}\cdot E_2^*,
 \sqrt{\eps_3}\cdot E_3^*)\} $$
in this case.

Thus we have shown that $\ker \xi \ne 1$ implies $u=2$ and
$\# \ker \xi = 2$, where the index $2^u = (E^{(2)}:E_k)$ was
introduced above. If, on the other hand, $u = 2$, then
$k_i = k(\sqrt{\eps_i}\,)$ for units $\eps_i \in E_k$, and
$(\sqrt{\eps_1}\cdot E_1^*, \sqrt{\eps_2}\cdot E_2^*,
 \sqrt{\eps_3}\cdot E_3^*)$ is a nontrivial element of
$\ker \xi$. Therefore $\# \ker \xi = 2^v$ with
$v = 2^u - u - 1$, and
\begin{equation}\label{E2.12}
  (E_1E_2E_3 : E_1^* E_2^* E_3^*) = 2^{-v} \prod (E_i : E_i^*).
\end{equation}

To determine $(E_i:E_i^*)$, we make use of a well known
group theoretical lemma:

\begin{lemma}\label{L2.13}
Let $G$ be a group and assume that $H$ is a subgroup of
finite index in $G$. If $f$ is a homomorphism from $G$ to
another group, then
$$ (G:H) = (G^f:H^f)(G_fH:H), $$
where $G^f = \im f$, $G_f = \ker f$, and $H^f$ is the image
of the restriction of $f$ to $H$.
\end{lemma}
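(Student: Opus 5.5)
The plan is to factor the index through the intermediate subgroup $G_f H$. Since $H \subseteq G_f H \subseteq G$, multiplicativity of indices gives
$$ (G:H) = (G:G_fH)\,(G_fH:H). $$
Note that $G_f = \ker f$ is normal in $G$, so $G_fH$ is indeed a subgroup of $G$. It then remains to identify the first factor $(G:G_fH)$ with $(G^f:H^f)$.

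For this identification, consider the surjection $G \lra G^f/H^f$ of coset spaces, $g \too f(g)H^f$. It suffices to check that two elements $g, g'$ have the same image if and only if they lie in the same left coset of $G_fH$:
\begin{itemize}
\item Suppose $f(g)^{-1}f(g') = f(h)$ for some $h \in H$. Then $g^{-1}g'h^{-1} \in G_f$. Hence $g^{-1}g' \in G_fH$, using $G_fH = HG_f$, which holds by normality.
\item Conversely, if $g^{-1}g' = xh$ with $x \in G_f$ and $h\in H$, then $f(g^{-1}g') = f(h) \in H^f$.
\end{itemize}
This yields a bijection $G/G_fH \to G^f/H^f$ of coset sets. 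Therefore $(G:G_fH) = (G^f:H^f)$, and the finiteness of both sides follows from the finiteness of $(G:H)$.

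No step here is a genuine obstacle. The only points needing care are to work with coset sets rather than quotient groups, since $H$ need not be normal, and to use the normality of $\ker f$ so that $G_fH$ is a subgroup. In the application all groups are abelian, so even these points are automatic.
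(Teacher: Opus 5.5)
Your proof is correct and complete. The paper gives no proof of its own: it quotes the lemma as well known and only applies it with $G_f \subseteq H$, where the second factor is $1$. Your route is the standard one: split $(G:H)$ at $G_fH$, then identify $G/G_fH$ with $G^f/H^f$ via $g \mapsto f(g)H^f$.

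One cosmetic point concerns your first bullet. There, $g^{-1}g'h^{-1} \in G_f$ already gives $g^{-1}g' \in G_f h \subseteq G_fH$ directly. So the appeal to $G_fH = HG_f$ is unnecessary, and normality of $\ker f$ is needed only to know that $G_fH$ is a subgroup.
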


We apply this lemma to $G = E_i$, $H = E_i^*$, and $f = N_i$.
Then $G_f = \{\eps \in E_i \sth N_i \eps = 1\} \subseteq E_i^* = H$,
$G^f = E_i^N = \{N_i \eps \sth \eps \in E_i\}$, and
$H^f = E_k^2$; now Lemma \ref{L2.13} gives
$$ (E_i:E_i^*) = (G:H) = (G^f:H^f) = (E_i^N:E_k^2).$$
Putting (\ref{E2.10})--(\ref{E2.12}) together, we find
\begin{eqnarray*}
(R:R_\pi) & = & 2^{t+\kappa-\lambda-2} q(K) h_k (E_1E_2E_3:E^*) \\
  & = & 2^{t+\kappa-\lambda-2} q(K) h_k
         \frac{(E_1E_2E_3:E_1^*E_2^*E_3^*)}{(E^*:E_1^*E_2^*E_3^*)} \\
  & = & 2^{t+\kappa-\lambda-2-v} q(K) h_k
           \prod  \frac{(E_i^N:E_k^2)}{(H_0:E_k^2)},
\end{eqnarray*}
which is (\ref{E2.5}).

The only claim left to prove is (\ref{E2.7}). If $\fp$ is a place
in $k$ which ramifies in $K/k$, then $e(\fp) = 2$ if $\fp$ ramifies
in two of the three intermediate fields, and $e(\fp) = 4$ if $\fp$
is ramified in $k_i/k$ for $i = 1, 2, 3$ (this can only happen
for $\fp \mid 2$). This observation yields the first and the
second equation in (\ref{E2.7}).

Npw $n = (k:\Q) = r_k + 2s_k$ and $4n = (K:\Q) = r_K + 2s_K$,
where $r_*$ (resp. $s_*$) denotes the number of real (resp.
complex) infinite places in a field. Suppose that exactly
$d$ infinite places of $k$ ramify in $K/k$; then
$r_K = 4(r_k-d)$, $s_K = 4s_k + 2d$, and Dirichlet's unit
theorem gives $\kappa = r_k+s_k-1$ and
$$ \lambda = r_K + s_K - 1 =
  4(r_k-d) + 4s_k + 2d - 1 = 4\kappa - 2d + 3. $$

\section{Walter's Formula}
Assume that $K/k$ is a normal extension, $\Gal(K/k) = (\Z/l\Z)^m$
($l$ prime), and suppose moreover that there is no ramification
at the infinite primes of $k$. The formula given by Kuroda
\cite{Kur2} is
$$ \frac{H}h = l^{-A} (E : E_\Omega) \cdot \prod \frac{h_i}h. $$
here
\begin{itemize}
\item $h$ is the class number of $k$,
\item $H$ is the class number of $K$,
\item $h_i$ is the class number of the intermediate field $k_i$;
      there are exactly $l = \frac{l^m-1}{l-1}$ such fields $k_i$;
\item $E$ is the unit group of $\OO_K$,
\item $E_\Omega = \prod E_i$ is the group generated by the units of
      the subfields $k_i$,
\item $A = \frac{l^u-1}{l-1} - u + \frac{\kappa+1}2
      \Big( (m-1)(l^m-1) + \frac{l^m-1}{l-1} - m\Big)
      - \kappa\Big( \frac{l^m-1}{l-1} - m\Big)$;
\item $u$ is the number of independent extensions of type
      $k(\sqrt[l]{\eps}\,)$, where $\eps$ is a unit in $\OO_k$.
\end{itemize}
Using these notations, the formula given by Walter \cite{Wal}
reads as follows:
$$ \frac{H}h = l^{-A} (E:WE_\Omega) \cdot \prod \frac{h_i}h,$$
where $W$ is the group of roots of unity in $K$ and
$$ A = \frac{l^u-1}{l-1} - u + \frac12(m-1)(\lambda-1)
        - \frac{\kappa-1}2\Big(\frac{l^m-1}{l-1} - 1\Big) - w.$$
In order to define $w$, we have to distinguish two cases:

\medskip \noindent
(A) None of the $k_i$ has the form $k_i = k(\sqrt{-1}\,)$:
    then $w = 0$;

\medskip \noindent
(B) $l=2$ and $k_1 = k(\sqrt{-1}\,)$, say; then
    $2^w = (W^{(2)}:W_1^{(2)})$, where $W^{(2)}$
    (resp. $W_1^{(2)}$) is the $2$-Sylow subgroup of $W$
    (resp. $W_1$), and $W_1$ is the group of roots of
    unity in $k_1$.
\medskip

It is easily seen that $2^w = (W : \prod W_i)$ (just remember
that the field of $p^n$-th roots of unity has cyclic Galois
group over $\Q$ for $p > 2$). If we recall the fact that
Kuroda's formula applies only if no infinite places ramify
(which implies that $\lambda+1 = l^m(\kappa+1)$), the two
formulas give the same result if and only if
$\gamma := (E : E_\Omega)/2^w(E:WE_\Omega) = 1$. Obviously
$\gamma = 1$ if $l > 2$; for $l = 2$ we obtain
$$ (E:E_\Omega) = (E : WE_\Omega)(W E_\Omega : E_\Omega)
   = (E : WE_\Omega)(W : E_\Omega \cap W). $$
Now $\prod W_i \subset E_\Omega \cap W$, therefore
$$ (W : E_\Omega \cap W) =
     \frac{(W : \prod W_i)}{(E_\Omega  \cap W: \prod W_i)}$$
and
$$ \gamma = \frac{(E : E_\Omega)}{2^w (E : WE_\Omega)}
      = (E_\Omega \cap W : \prod W_i). $$
As can be seen, $\gamma = 1$ if and only if
$W \cap \prod E_i = \prod W_i$, i.e., if and only if every
root of unity that can be written as a product of units from
the subfields is actually a product of roots of unity lying
in the subfields. If $K$ does not contain the $8$th roots of
unity, this is certainly true; the following example shows that
it does not hold in general.

Take $k = \Q(\sqrt{3}\,)$, $K = \Q(i,\sqrt{2}, \sqrt{3}\,) =
\Q(\zeta_{24})$; Walter's formula yields $h(K) = 2$; but
$\Z[\zeta_{24}]$ is known to be Euclidean with respect to the norm,
and therefore has class number $1$.

Put $k_1 = k(i)$, $k_2 = k(\sqrt{2}\,)$, $k_3 = k(\sqrt{-2}\,)$,
and set
$$ \eps_2 = 1 + \sqrt{2}, \quad \eps_3 = 2 + \sqrt{3}, \quad
   \eps_6 = 5 + 2\sqrt{6}, $$
$$ \sqrt{\eps_3} = \frac{1+\sqrt{3}}{\sqrt{2}}, \quad
   \sqrt{\eps_6} = \sqrt{2} + \sqrt{3}, $$
$$ \sqrt{-\eps_3} = \frac{1+\sqrt{3}}{i\sqrt{2}}, \quad
   \sqrt{i\eps_3} = \frac{1+\sqrt{3}}{1-i}, $$
$$ \sqrt{\zeta_8\eps_2\sqrt{\eps_3\eps_6}}
   \ = \ \frac14(4 + 3\sqrt{2} + 2\sqrt{3}
         + \sqrt{6} + 2i + \sqrt{-2} + \sqrt{-6}\,).$$
Then $\kappa = 1$, $\lambda = 3$, $t_1 = 2$, $t_2 = 3$, $d = 2$,
$u = 2$ since $k_1 = k(\sqrt{-1}\,)$ and $k_2 = k(\sqrt{\eps_3}\,)$,
$w = 1$ since $W = \la \zeta_{24} \ra$ and $W_1 = \la \zeta_{12} \ra$,
and $q(K) = 2$ (in this example, the unit indices $(E : E_\Omega)$
and $(E:WE_\Omega)$ coincide, and in Wada \cite{Wad} it is shown
that $(E : E_\Omega) = 2$). Walter's formula gives
$$ \ts h(K) = \frac12 q(K) \prod h_i = \frac12 \cdot 2 \cdot 2 = 2.$$

I have also computed the groups that occur in the proof of
Kuroda's formula:
\begin{itemize}
\item $E_k = \la -1, \eps_3 \ra$;
\item $E_1 = \la \zeta_{12}, \sqrt{i\eps_3}\,\ra$, \
      $E_1^* = \la \zeta_{12}, \eps_3\ra$, \
      $E_1^N = \la \eps_3 \ra$;
\item $E_2 = \la -1, \eps_2, \sqrt{\eps_3}, \sqrt{\eps_6} \,\ra$, \
      $E_2^* = \la -1, \eps_2^2, \eps_3, \sqrt{\eps_6}\,\ra$, \
      $E_2^N = \la -1, \eps_3 \, \ra$;
\item $E_3 = \la -1, \sqrt{-\eps_3}\,\ra$,
      $E_3^* = -1, \eps_3\, \ra$, $E_3^N - \la \eps_3 \,\ra$;
\item $\prod (E_i : E_i^*) = 2 \cdot 4 \cdot 2 = 16$;
\item $H_0 = \la \eps_3 \ra$; $H = H_0$ since $-1$ is not a norm
      residue at $\infty$;
\item $E = \la \zeta_{24}, \eps_2, \sqrt{\eps_3},
       \sqrt{\zeta_8\eps_2\sqrt{\eps_3\eps_6}}\,\ra$
      (see Wada \cite{Wad});
\item $E_1E_2E_3 = \la \zeta_{24}, \eps_2, \sqrt{\eps_3},
                        \sqrt{\eps_6}\,\ra$
      ($\zeta_8 = \sqrt{i\eps_3}/\sqrt{\eps_3}$\,), $q(K) = 2$;
\item $E^* = \la \zeta_{12}, \eps_2^2, \sqrt{i\eps_3},
        \sqrt{\eps_6}\,\ra$, $E:E^*) = 8$, $(E_1E_2E_3:E^*) = 4$;
\item $E_1^* E_2^* E_3^* = \la \zeta_{12}, \eps_2^2, \eps_3,
       \sqrt{\eps_6}\,\ra$, $(E^* : E_1^* E_2^* E_3^*) = 2$;
\item $E^{(2)} = \la i, \sqrt{\eps_3} \ra$;
\item $\ker \psi = \{(\overline{1}, \overline{1}, \overline{1})$,
      $(iE_1^*, \sqrt{\eps_3} E_2^*, \sqrt{-\eps_3} E_3^*)\}$,
      because $i \sqrt{\eps_3} \sqrt{-\eps_3} = \eps_3$ can be
      written in the form $\eps_3 = \eps_3 \cdot 1 \cdot 1 \in
      E_1^* E_2^* E_3^*$, while $\sqrt{\eps_3} \notin E_2^*$.
\end{itemize}
The prime ideal $\fl$ in $k_3$ above $2$ generates an ideal
class of order $2$ in $\Cl(k_3)$: $\fl$ is not principal,
because its relative norm to $\Q(\sqrt{-6}\,)$ is not, and
its order divides $2$ because $\frf^2 = (1+\sqrt{3}\,)$. This
implies
$$ \# A_1 = \# A_2 = 1, \quad
    A_3 = \la [\fl] \ra, \quad
   \ker j = \ker j^*  = 1 \times 1 \times A_3 \simeq A_3.$$

\newpage

Marcin Mazur and Stephen V. Ullom [{\em Unit indices and cohomology for
    biquadratic extensions of imaginary quadratic fields},
  Journal de Th\'eorie des Nombres de Bordeaux {\bf 20} (2008), 183--204]
observed the following error in my manuscript:
\begin{quote}
  A substantial part of Lemmermeyer's paper is devoted to his formula (2.5)
  for the index $[R : R_\pi]$. Here $R$ is the group of fractional ideals of
  $L$ of the form $I_1I_2I_3$, where $I_j$ is the image of an ambiguous
  fractional ideal of $L_i$. He defines $R_\pi$ as the principal ideals of
  $R$ but this is incorrect. It should be defined as those principal
  fractional ideals of $L$ whose square comes from a principal ideal of $M$.
  This is in fact the group Lemmermeyer uses in his computations so
  the mistake does not affect the validity of his results.
\end{quote}

\end{document}